\def\R{\mathbb{R}}
\def\N{\mathbb{N}}
\def\cD{\mathcal{D}}
\def\mylist#1 {\ifx!#1\else\makebox[4em][l]{#1} \expandafter\mylist\fi}
\declaretheorem[name=Theorem,numberwithin=section]{thm}
\theoremstyle{definition}
\newtheorem{defn}[thm]{Definition}
\theoremstyle{definition}
\newtheorem{rem}[thm]{Remark}
\newtheorem{conj}[thm]{Conjecture}
\begin{document}
\title[The unknotting number and Reinforcement Learning]{The unknotting number, hard unknot diagrams, and reinforcement learning}

\author[Applebaum]{Taylor Applebaum}
\address{Google DeepMind, London, UK}
\email{applebaum@google.com}

\author[Blackwell]{Sam Blackwell}
\address{Google DeepMind, London, UK}
\email{blackwells@google.com}

\author[Davies]{Alex Davies}
\address{Google DeepMind, London, UK}
\email{adavies@google.com}

\author[Edlich]{Thomas Edlich}
\address{Google DeepMind, London, UK}
\email{edlich@google.com}

\author[Juh\'asz]{Andr\'{a}s Juh\'{a}sz}
\address{Mathematical Institute, University of Oxford, Andrew Wiles Building,
	Radcliffe Observatory Quarter, Woodstock Road, Oxford, OX2 6GG, UK}
\email{juhasza@maths.ox.ac.uk}

\author[Lackenby]{Marc Lackenby}
\address{Mathematical Institute, University of Oxford, Andrew Wiles Building,
	Radcliffe Observatory Quarter, Woodstock Road, Oxford, OX2 6GG, UK}
\email{lackenby@maths.ox.ac.uk}

\author[Toma\v{s}ev]{Nenad Toma\v{s}ev}
\address{Google DeepMind, London, UK}
\email{nenadt@google.com}

\author[Zheng]{Daniel Zheng}
\address{Google DeepMind, London, UK}
\email{dhhzheng@google.com}

\subjclass[2010]{57K10, 57K14, 68T07, 68T20}
\keywords{knot, unknotting number, reinforcement learning, hard unknot diagram, Jones polynomial}

\begin{abstract}
	We have developed a reinforcement learning agent that often finds a minimal sequence of unknotting crossing changes for a knot diagram with up to 200 crossings, hence giving an upper bound on the unknotting number. We have used this to determine the unknotting number of 57k knots. We took diagrams of connected sums of such knots with oppositely signed signatures, where the summands were overlaid. The agent has found examples where several of the crossing changes in an unknotting collection of crossings result in hyperbolic knots. Based on this, we have shown that, given knots $K$ and $K'$ that satisfy some mild assumptions, there is a diagram of their connected sum and $u(K) + u(K')$ unknotting crossings such that changing any one of them results in a prime knot. As a by-product, we have obtained a dataset of 2.6 million distinct hard unknot diagrams; most of them under 35 crossings. Assuming the additivity of the unknotting number, we have determined the unknotting number of 43 at most 12-crossing knots for which the unknotting number is unknown. 
\end{abstract}

\maketitle

\section{Introduction}

Knot theory plays a fundamental role in low-dimensional topology. A \emph{knot} is a smooth embedding $K \colon S^1 \hookrightarrow S^3$. We say that the knots $K$ and $K'$ are \emph{equivalent} if there is an orientation-preserving automorphism $\varphi$ of $S^3$ such that $\varphi \circ K = K'$. We can represent a knot using a projection onto $S^2$ with only transverse double point singularities, together with information at each double point about which strand is higher with respect to the projection. This is called a \emph{knot diagram}. Two knot diagrams represent the same knot if and only if they are related by a sequence of Reidemeister moves R1--R3. For textbooks on knot theory, see Burde--Zieschang~\cite{Burde-Zieschang}, Lickorish~\cite{Lickorish-book}, and Rolfsen~\cite{Rolfsen-book}.

\subsection{The unknotting number.} The unknotting number is one of the oldest and most natural, yet most elusive knot invariants. The \emph{unknotting number $u(\cD)$ of a knot diagram} $\cD$ is the minimal number of crossing changes required to obtain a diagram of the unknot $U$. The \emph{unknotting number $u(K)$ of a knot $K$} is defined as 
\[
u(K) := \min\{u(\cD) : \cD \text{ is a diagram of } K \}.
\]
Taniyama~\cite{Taniyama} has shown that, given any knot $K$ and $n \in \N$, there is a diagram $\cD$ of $K$ with $u(\cD) \ge n$.

A more intrinsic definition of the unknotting number is obtained using crossing arcs. A \emph{crossing arc} $a$ for a knot $K$ is a framed, oriented arc smoothly embedded in $S^3$ such that $K \cap a = \partial a$. Here, a framing of $a$ is a trivialisation $(v_1, v_2)$ of the normal bundle of $a$ such that $v_1(a(0))$ and $v_2(a(1))$ are positive tangents to $K$. A crossing change along $a$ is obtained by performing a finger move on $K$ along $a$. More precisely, for some $\varepsilon > 0$ small, we replace a small segment of $K$ passing through $a(0)$ with curves $\varepsilon v_1(a(t))$ and $- \varepsilon v_1(a(t))$ for $t \in I$, together with a semi-circular arc connecting $\varepsilon v_1(a(1))$ and $-\varepsilon v_1(a(1))$. Then $u(K)$ is the minimal number of such crossing changes that result in $U$. If $a_1, \dots, a_n$ is a collection of crossing arcs, then we can always isotope them and the knot $K$ so that the crossing arcs become short, vertical line segments with parallel framing, in which case the crossing changes along them correspond to crossing changes in the respective diagram. This shows the equivalence between the two definitions. Yet another description of the unknotting number is given by taking the minimal number of double points that appear in a generic regular homotopy from $K$ to $U$; see Lickorish~\cite[p.~7]{Lickorish-book}.

Another classical knot invariant that can be defined in an analogous manner is the crossing number. The \emph{crossing number $c(\cD)$ of a knot diagram} $\cD$ is the number of double points of $\cD$. The \emph{crossing number $c(K)$ of a knot $K$} in $S^3$ is defined as
\[
c(K) := \min\{c(\cD) : \cD \text{ is a diagram of } K \}.
\]
If $c(\cD) = c(K)$, we say that $\cD$ is a \emph{minimal crossing number diagram of $K$}. Note that a knot can have several inequivalent minimal crossing number diagrams. There are finitely many knots for each crossing number, so knots are usually tabulated by crossing number. See Rolfsen~\cite{Rolfsen-book} for a table of knots up to ten crossings and KnotInfo~\cite{knotinfo} for knots up to 13 crossings. For example, the only crossing number zero knot is the \emph{unknot} $U$ and there are no knots of crossing number one or two. There are two knots of crossing number three, the right-handed trefoil $3_1$ (often denoted by $T_{2,3}$, as it is the $(2,3)$-torus knot) and its mirror, the left-handed trefoil $-3_1$. There is only one knot of crossing number four, the figure eight knot $4_1$, which is equivalent to its mirror. The number of knots grows exponentially as the crossing number increases; see Welsh~\cite{Welsh}. For an integer $c \in [3,10]$, the notation $c_n$ refers to knot number $n$ of crossing number $c$ in Rolfsen's table.

There is no algorithm known to compute $u(K)$. The main difficulty is that there are knots $K$ such that $u(K) < u(\cD)$ for any minimal crossing number diagram $\cD$ of $K$. 
For example, the knot $10_8$ has a unique minimal crossing number diagram $\cD$ with $u(\cD) = 3$, but $u(10_8) = 2$. If one changes a suitable crossing of $\cD$, one obtains a 10-crossing diagram of $6_2$, and $u(6_2) = 1$. By applying random Reidemeister moves to $\cD$, it is easy to find a diagram $\cD'$ of $10_8$ with $u(\cD') = 2$.
 
Of the 2978 knots with at most 12 crossings, 660 have unknown unknotting number, including 9 knots with crossing number 10. Of the remaining 2318 knots, only 25 have $u(\cD) > u(K)$ for their minimal crossing number diagram $\cD$ in KnotInfo~\cite{knotinfo}.

A conjecture of Bernhard~\cite{Bernhard} and Jablan~\cite{Jablan} stated that every knot $K$ has a minimal crossing number diagram $\cD$ and a crossing $c$ such that changing $c$ results in a knot $K'$ with $u(K') = u(K) - 1$. If true, this would yield an algorithm for computing $u(K)$: Enumerate all minimal crossing number diagrams of $K$, list all diagrams that can be obtained from these by changing one crossing, and repeat with these knots until we first reach the unknot. The number of steps is $u(K)$.  However, Brittenham and Hermiller~\cite{Brittenham-Hermiller} have shown this to be false: at least one of 12n288, 12n491, 12n501, and 13n3370 violates the conjecture. One can obtain each of 12n288, 12n491, and 12n501 from 13n3370 via a single crossing change. The knot 13n3370 is the closure of a 20-crossing braid, where changing a single crossing gives 11n21 that has unknotting number one. So $u(13n3370) \le 2$, but it is hard to find a diagram $\cD$ of 13n3370 with $u(\cD) = 2$ by applying random Reidemeister moves to a minimal crossing number diagram. 

The \emph{Gordian graph} $G$ has vertices knots, and an edge connects two knots if they are related by a crossing change. The \emph{Gordian distance} $d(K, K')$ of the knots $K$ and $K'$ is the distance of $K$ and $K'$ in $G$. Using this notion, $u(K) = d(K, U)$. Baader~\cite{Baader} has shown that, if $d(K, K') = 2$, then there are infinitely many knots $K''$ such that 
\[
d(K,K'') = d(K', K'') = 1.
\]
Hence, the number of minimal unknotting trajectories for a knot is typically infinite.

In practice, a good upper bound on $u(K)$ can often be obtained by first simplifying a diagram of $K$ (removing crossings using Reidemeister moves) and then selecting a crossing to change by picking one that results in a diagram that can be further simplified to have the fewest crossings. This process is then repeated iteratively.

The \emph{4-ball genus} $g_4(K)$ of $K$ is the minimal genus of a compact, connected, and oriented surface smoothly embedded in the 4-dimensional unit ball $D^4$  with boundary $K$. It satisfies $g_4(K) \le u(K)$. Indeed, the trace of a generic regular homotopy of $K$ to the unknot with $u(K)$ transverse double points gives rise to an immersed disc in $D^4$ with $u(K)$ transverse double points and boundary $K$. If we smooth these double points, we obtain a compact, connected, and oriented surface of genus $u(K)$ in $D^4$ with boundary $K$. 

Most known lower bounds on $u(K)$ are also lower bounds on $g_4(K)$.
Among these, $|\sigma(K)|/2$, $|\tau(K)|$, $|\nu(\pm K)|$ (where $-K$ is the mirror of $K$), and $|s(K)|/2$ are efficiently computable, 
where $\sigma$ is the signature~\cite[Proposition~4.28]{Juhasz-book}, the invariants $\tau$ and $\nu$  were defined and shown to bound $g_4(K)$ by Ozsv\'ath and Szab\'o~\cite{OSz-4-ball} using knot Floer homology~\cite{OSz-HFK}\cite{Ras}, and $s$ was defined and shown to bound $g_4(K)$ by Rasmussen~\cite{Rasmussen-s} via Khovanov homology~\cite{Khovanov-homology}. (Note that $\sigma$, $\tau$, and $s$ are symmetric with respect to mirroring, but $\nu$ is not.)
We obtain $u(K)$ if the upper and lower bounds agree. For example,
Kronheimer and Mrowka~\cite{Kronheimer-Mrowka-Milnor} have shown that, for the torus knot $T_{p,q}$, we have
\[
u(T_{p,q}) = \frac{(p-1)(q-1)}{2}.
\]
In general, there are few classes of knots for which the unknotting number is known.
See~\cite{Lackenby-survey} for a survey of results on the unknotting number.

\subsection{Additivity of unknotting number}
An old open question is whether the unknotting number is additive under connected sum.

\begin{conj}\label{conj:additivity}
For knots $K$ and $K'$, we have $u(K \# K') = u(K) + u(K')$. 
\end{conj}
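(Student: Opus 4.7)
The inequality $u(K \# K') \le u(K) + u(K')$ is elementary: concatenate unknotting sequences for $K$ and $K'$ realised by crossing arcs supported in two disjoint balls that miss a decomposing $2$-sphere. The entire content of the conjecture therefore lies in the reverse inequality, which has resisted proof since Milnor's original formulation.

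My first line of attack would be through additive lower bounds. Each of $|\sigma|/2$, $|\tau|$, and $|s|/2$ is bounded above by $u$ and is additive under connected sum, since the signature comes from the Seifert form and $\tau$ and $s$ are homomorphisms from the concordance group; $|\nu(\pm K)|$ gives a further one-sided additive bound. Whenever one of these bounds is simultaneously sharp on $K$ and on $K'$, additivity follows at once, which already dispatches most tabulated examples. The remaining case is the infinite class of knots for which there is a genuine gap between $g_4$ and $u$, and here no known additive bound suffices.

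For that hard case the natural topological plan is to fix a decomposing sphere $S \subset S^3$ separating $K \# K'$ into its summands, take a minimal collection of unknotting crossing arcs $a_1, \dots, a_n$ with $n = u(K \# K')$, and attempt to isotope the $a_i$ together with $K \# K'$ so that each $a_i$ lies on one side of $S$. Equivalently, one would work with the immersed disc in $D^4$ with $n$ double points that arises from the regular homotopy to $U$, and try to split it along an extension of $S$ without introducing new singularities. If either can be done, the arcs (or half-discs) on the two sides realise separate unknottings of $K$ and $K'$, forcing $n \ge u(K) + u(K')$. The main obstacle, and the reason this has stayed open for more than half a century, is that a crossing arc meeting $S$ transversely can genuinely shuffle strands between the two sides; Baader's result that infinitely many knots lie on every length-two unknotting geodesic indicates that any successful argument must be global rather than local to a single diagram or to a single choice of unknotting sequence. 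I would therefore expect real progress either from a new additive concordance invariant strictly sharper than $s$ and $\tau$, or from a branched-cover argument that converts crossing changes into $\pm 1$-framed surgeries in $\Sigma_2(K \# K')$ and exploits the splitting of that cover along the lift of $S$.
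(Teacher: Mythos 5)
You have not proved the statement, and neither does the paper: this is Conjecture~\ref{conj:additivity}, which remains open, and the paper's contribution is an extensive (unsuccessful) computational search for counterexamples together with a disproof of a stronger conjecture that would have implied it. Your proposal is honest in acknowledging this, but two of its concrete claims fail in ways the paper directly addresses. First, it is not true that additivity ``follows at once'' whenever an additive lower bound such as $|\sigma|/2$, $|\tau|$, or $|s|/2$ is sharp on both summands: these invariants are signed, so if $u(K) = \sigma(K)/2$ and $u(K') = -\sigma(K')/2$, the bound on the connected sum is only $|\sigma(K)+\sigma(K')|/2$, which can be far smaller than $u(K)+u(K')$. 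This is precisely why the paper builds its search dataset from summands with oppositely signed signatures, and why even $u(T_{2,3} \# -T_{2,5}) = u(T_{2,3}) + u(T_{2,5})$ is recorded there as unknown. So this route does not ``dispatch most tabulated examples'' in the relevant (opposite-sign) case.

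Second, your ``natural topological plan'' --- isotope a minimal collection of unknotting crossing arcs so that each lies on one side of the decomposing sphere $S$ --- is a strengthening of Conjecture~\ref{conj:strong} of the paper, which only asks that \emph{one} arc be isotoped off $S$ (that weaker statement already yields additivity by induction). The paper exhibits explicit counterexamples to Conjecture~\ref{conj:strong}, and Theorem~\ref{thm:strong} shows that, under mild hypotheses, one can always construct a diagram of $K_1 \# K_2$ carrying an unknotting set of size $u(K_1)+u(K_2)$ each of whose crossing changes produces a \emph{prime} knot; when $u(K_i)>1$ such crossing arcs cannot be isotoped off the connected-sum sphere. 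So the obstruction you anticipate at the end of your proposal is not merely a technical difficulty: it is realised by genuine examples, and any proof of additivity must survive minimal unknotting collections in which no arc splits along $S$. Of your three suggested routes, the first is insufficient for oppositely signed summands, the second is refuted by the paper's own results, and the third (double branched covers and $\pm 1$-framed surgeries) is not pursued there and remains speculative.
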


There is very little theoretical evidence to support this conjecture.
Scharlemann~\cite{Scharlemann-unknotting} has shown that $u(K \# K') \ge 2$ if $K$, $K' \neq U$.
More recently, Alishahi and Eftekhary~\cite{Alishahi-Eftekhary-unknotting} have proven using knot Floer homology that 
\[
u(K \# T_{p,q}) \ge p-1
\]
for integers $0 < p < q$.
However, these results leave open the possible existence of knots $K$, $K'$ for which $u(K)$ and $u(K')$ are both large but where $u(K \# K') = 2$.

We therefore endeavoured to find counterexamples to Conjecture \ref{conj:additivity}. Although we were not successful, we discovered a large amount of new and interesting information about unknotting number and about knot diagrams.

To find counterexamples to the conjecture, one needs to start with knots $K$ and $K'$ with known unknotting numbers, and then to find efficient ways of unknotting $K \# K'$. One significant source of knots $K$ with known unknotting number is those for which $u(K) = |\sigma(K)|/2$. Given two such knots $K$ and $K'$, then of course $u(K \# K') = u(K) + u(K')$ if $\sigma(K)$ and $\sigma(K')$ have the same signs, as $\sigma$ is additive under connected sum. However, if they have opposite signs, then there is no obvious reason why $K \# K'$ cannot be a counterexample to the conjecture. A further source of knots with known unknotting number are torus knots, and again there seems to be no known reason why $u(K \# K') = u(K) + u(K')$ for torus knots of signature with opposite signs.
Indeed, it is currently unknown whether  
\[
u(T_{2,3} \# -T_{2,5}) = u(T_{2,3}) + u(T_{2,5}).
\]

One other reason to doubt Conjecture \ref{conj:additivity} is the apparent absence of any plausible potential method for proving it. One possible approach might be to establish the following stronger conjecture.

\begin{conj}\label{conj:strong}
	In any collection of unknotting crossing arcs for $K \# K'$, there is one arc that can be isotoped to be disjoint from the 2-sphere specifying the connected sum.
\end{conj}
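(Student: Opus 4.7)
The plan is to argue by contradiction. Suppose $\{a_1, \ldots, a_n\}$ is an unknotting collection of crossing arcs for $K \# K'$ such that, for every $i$, no ambient isotopy of the pair $(S^3, K \# K')$ carries $a_i$ to an arc disjoint from $S$. I would first place the collection in a position minimising the total geometric intersection number $\sum_i |a_i \cap S|$ under such isotopies, which in particular allow the endpoints of the $a_i$ to slide along $K \# K'$ across the two points $\{p,q\} = S \cap (K \# K')$. By hypothesis, the minimum intersection count for each individual $a_i$ is strictly positive in this configuration.

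The next step is an innermost-arc analysis on the two closed balls $B$ and $B'$ bounded by $S$, which contain the $K$-summand arc $\alpha$ and the $K'$-summand arc $\alpha'$ respectively. Each $a_i$ is cut by $S$ into sub-arcs lying in either $(B, \alpha)$ or $(B', \alpha')$. I would look for an outermost sub-arc $\beta$, with one endpoint on $K \# K'$ and the other on $S$, which is boundary-parallel in the tangle complement $B \setminus \nu(\alpha)$ or $B' \setminus \nu(\alpha')$: pushing such a $\beta$ across $S$ reduces the intersection count, contradicting minimality. The heart of this stage is to show that such a reduction is always available, so that in fact all outermost sub-arcs are essential, carrying non-trivial topological information about the two tangles.

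To turn this into a contradiction, I would lift to four dimensions. The crossing arcs give an immersed null-concordance $\Delta \colon D^2 \looparrowright D^4$ for $K \# K'$ with exactly $n$ transverse double points. I would extend $S \subset \partial D^4$ to a properly embedded 3-ball $H \subset D^4$ in general position with respect to $\Delta$, so that $\Delta \cap H$ is a compact 1-manifold with boundary $\{p,q\}$, and the double points of $\Delta$ split as $u_B + u_{B'} = n$ across the two 4-balls cut off by $H$. The goal is to show that each side yields an immersed null-concordance for a knot obtained from the tangle $(B, \alpha)$ (respectively $(B', \alpha')$) by capping with an arc of $\Delta \cap H$, giving $u_B \ge u(K)$ and $u_{B'} \ge u(K')$, and hence $n \ge u(K) + u(K')$; a careful analysis of the extremal case $n = u(K) + u(K')$ should then force the existence of some $a_i$ isotopic off $S$.

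The hard part will be the four-dimensional step: no analogue of Schubert's additivity of bridge number or Gabai's additivity of Seifert genus under connected sum is known for immersed null-concordances, and one cannot in general put $H$ in a normal form relative to $\Delta$ in which the two sides cleanly inherit the unknotting data. Working purely in three dimensions, the innermost-arc analysis is also delicate, because essential outermost sub-arcs in the tangle complements need not reassemble into a controlled unknotting sequence for the individual summands. This is essentially the same difficulty that underlies Conjecture~\ref{conj:additivity}, and it explains why Conjecture~\ref{conj:strong} is expected to be strictly harder than the additivity conjecture it implies.
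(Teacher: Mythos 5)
There is a fundamental problem here: Conjecture~\ref{conj:strong} is \emph{false}, and the paper does not prove it --- it disproves it. The main theorem of the paper (Theorem~\ref{thm:strong}) constructs, for suitable prime knots $K_1$ and $K_2$ that are not 2-bridge and satisfy mild hypotheses, a diagram of $K_1 \# K_2$ together with an unknotting set of $u(K_1)+u(K_2)$ crossings such that changing \emph{any one} of them yields a prime knot. If one of those crossing arcs could be isotoped off the summing sphere, say to the $K_2$ side, then changing it would produce $K_1 \# K_2''$ with $u(K_2'') = u(K_2)-1$, which (since the relevant $u(K_i)>1$) is a non-trivial connected sum and hence not prime --- a contradiction. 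The authors also report 20 explicit computational counterexamples found by their RL agent (e.g.\ Figure~\ref{fig:strong-counterexample}, where the four unknotting crossings 13, 14, 48, 50 each produce a hyperbolic knot). So no proof strategy for the statement as written can succeed.

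Setting that aside, your sketch is also internally incomplete in exactly the place where it would have to do all the work. The three-dimensional innermost/outermost arc analysis only reduces intersections with $S$ when a sub-arc is boundary-parallel in the tangle complement, and you give no mechanism for producing a contradiction once all outermost sub-arcs are essential; in the paper's counterexamples this is precisely the situation that actually occurs. The four-dimensional step is stated as a goal ($u_B \ge u(K)$, $u_{B'} \ge u(K')$) rather than argued, and you acknowledge that no normal-form or additivity result for immersed null-concordances is available --- indeed, proving that inequality for every collection of arcs would essentially prove Conjecture~\ref{conj:additivity}, which remains open. The correct response to this statement is not a proof but a counterexample, and the paper supplies both an explicit family (Theorem~\ref{thm:strong}) and concrete instances.
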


This implies Conjecture \ref{conj:additivity} by a simple induction on $u(K \# K')$. (See Section~\ref{sec:strong} for this implication.) However, we were able to find counterexamples to this conjecture, which we will describe below. Note that it is also open whether the crossing number is additive under connected sum, though this is widely believed to be true.

\subsection{Finding efficient unknotting sequences}
A crucial part of the strategy for disproving Conjecture \ref{conj:additivity} is to be able to find short unknotting sequences. In particular, in the case of $K \# K'$, the number of crossing changes needs to be less than $u(K) + u(K')$. Even when one is presented with a diagram $\cD$ for a knot $K$, it is not straightforward to compute $u(\cD)$ when the crossing number of $\cD$ is large.
For a knot diagram $\cD$ with $n$ crossings, $u(\cD) \le n/2$. If a set of crossings is unknotting, so is its complement. Hence, there are $2^{n-1}$ possibilities for the subset of crossings that yield a diagram of the unknot, up to taking complements.
This makes computing $u(\cD)$ practically impossible when $n$ is large. 

In order to find out which knot invariants to use for our reinforcement learning experiments, we first trained a supervised learning model on brute-forced unknotting sets that predicts the probability a given crossing lies in a minimal unknotting set. This is an instance of behavioural cloning, the simplest form of imitation learning. This performed well above baseline, and the most useful feature was the Jones polynomial.

We then trained a reinforcement learning agent that can efficiently find an unknotting sequence of crossing changes in a diagram with as many as 200 crossings. Given the small amount of initial training data, this was initially evaluated on a brute-forced dataset of diagrams. Thereafter, we used unknotting sets provided by the agent to evaluate progress.

We have used various features to aid the reinforcement learning agent, and again found the Jones polynomial to be by far the most useful. This suggests that the Jones polynomial contains yet unobserved information about the unknotting number.

By combining the agent with lower bounds coming from invariants such as the signature, $\tau$, $\nu$, and $s$, we have obtained a dataset of about 57k knot diagrams with known unknotting numbers. We have then taken connected sums of such diagrams, which were overlaid and, in some cases, then randomly mixed using Reidemeister moves. We have also run it on connected sums of braid closures that were mixed by inserting subwords representing the trivial braid. The agent found unknotting sequences that involved several crossing changes that resulted in hyperbolic knots, and were hence not connected sums. Throughout this paper, we will use the terminology that a crossing change in a diagram of a connected sum is \emph{inter-component} if it results in a knot that is not a connected sum (e.g., hyperbolic) and is \emph{in-component} otherwise. This led us to diagrams of connected sums of knots $K$ and $K'$ that admit an unknotting subset of crossings of size $u(K) + u(K')$, such that any single crossing change from the unknotting subset results in a hyperbolic knot, hence disproving Conjecture~\ref{conj:strong}. In fact, we will prove the following:

\begin{restatable}{thm}{strong} 
\label{thm:strong}
	Suppose that the prime knots $K_1$ and $K_2$ in $S^3$ are not 2-bridge. Suppose that, for $i \in \{1,2\}$, there is a set of $u(K_i)$ crossing changes to $K_i$ taking it to the unknot, with the property that changing any one of these crossings does not produce the connected sum of $K_i$ and a non-trivial knot. Furthermore, assume that $u(K_1) > 1$ or $u(K_2) > 1$. Then there is a diagram of $K_1 \# K_2$ and a set $C$ of unknotting crossings of size $u(K_1) + u(K_2)$ such that changing any crossing in $C$ results in a prime knot. 
\end{restatable}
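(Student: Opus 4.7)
The plan is to construct the desired diagram by starting from the naive connected sum and then modifying the prescribed crossings so that each becomes ``inter-component.'' Begin with diagrams $\cD_i$ of $K_i$ realising the unknotting sets $C_i$ given by the hypothesis, and let $\cD_0 := \cD_1 \# \cD_2$ with the obvious unknotting set $C_0 := C_1 \sqcup C_2$ of size $u(K_1) + u(K_2)$. At this stage, a single crossing change at $c \in C_i \subset C_0$ produces $K_i^{(j)} \# K_{3-i}$, which is composite whenever $K_i^{(j)} \ne U$; generically this happens for some $c$, so $\cD_0$ alone is insufficient.

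To repair this, I would modify $\cD_0$ so that the crossing arc of each prescribed crossing can no longer be isotoped rel $K_1 \# K_2$ into one summand. Concretely, drag a strand at each $c \in C_0$ through a meridian of the opposite summand and re-project; the new crossings $\tilde c$ have crossing arcs obtained from the original arcs by composition with a loop winding once around the opposite factor. Arrange the detours so that the simultaneous effect of the resulting set $C$ still reproduces the original unknotting of $C_0$, for instance by letting the meridional modifications cancel pairwise under the combined finger move. The collection $C$ in the new diagram then has the same size $u(K_1) + u(K_2)$ and is still an unknotting set.

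The heart of the proof is to show that changing a single crossing $\tilde c \in C$ yields a prime knot $K^*$. Scharlemann's lower bound $u(K_1 \# K_2) \ge 2$ excludes $K^* = U$, so suppose $K^* = J_1 \# J_2$ with both $J_i$ non-trivial. The decomposing sphere $S$ for $K^*$ pulls back across the single crossing change to a $2$-sphere in $S^3$ meeting $K_1 \# K_2$ in four points and interacting prescribably with the crossing ball of $\tilde c$. Put $S$ in general position with the canonical connected-sum sphere $\Sigma$ of $\cD_0$ and perform an innermost-disc and outermost-arc analysis of $S \cap \Sigma$. The possible outcomes are: a decomposing sphere for one of the $K_i$ (ruled out by primeness), a decomposing sphere for $K_i^{(j)}$ expressing it as $K_i$ connect-summed with a non-trivial knot (ruled out by the given hypothesis on the unknotting sets), or a thin-position configuration that forces the bridge number of $K_i$ to be at most $2$ (ruled out by the non-$2$-bridge hypothesis). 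The inter-component nature of $\tilde c$ is essential here, as it guarantees that $S$ must genuinely intersect $\Sigma$ and cannot be trivially pushed to one side.

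The main obstacle will be the final case analysis: systematically enumerating the ways $S$ can interact with $\Sigma$ and the crossing ball, and identifying precisely which configurations require each of the three hypotheses (primeness of the $K_i$, non-$2$-bridge, absence of a $K_i$-connect-sum splitting after a single crossing change) to derive a contradiction. I expect this step to be in the spirit of Scharlemann--Thompson's sphere-versus-ball arguments, generalised to keep track of the extra inter-component crossing arc.
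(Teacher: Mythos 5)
Your overall strategy---take the obvious connected sum with unknotting set $C_1\sqcup C_2$, then modify the diagram so the prescribed crossings become inter-component, then prove primeness by analysing how a hypothetical decomposing sphere meets the connected-sum sphere---is the same strategy the paper follows, but both of your key steps are left as plans rather than arguments, and the second one is missing the idea that actually makes the proof work. On the construction: ``drag a strand at each crossing through a meridian of the opposite summand and arrange the detours to cancel pairwise'' is not shown to produce a set $C$ that is still unknotting, nor a well-defined diagram. The paper instead aligns all the crossing arcs of $C_1\cup C_2$ consecutively along a single summing arc $a_+$ and performs explicit finger moves on only three crossings ($c_1$, $c_2$ and $d_2$, the ones adjacent to the sum and the far end of $C_2$), threading each lower strand along $a_+$ across the crossings of the other summand; because these are isotopies supported along $a_+$, changing all the crossings of the new diagram along $a_+$ manifestly still yields the unknot. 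Some concrete version of this is needed before the primeness question is even well posed.

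On primeness, your proposal asserts that an innermost-disc/outermost-arc analysis of $S\cap\Sigma$ terminates in exactly three configurations matching the three hypotheses, but that enumeration is precisely the content of the theorem and is not carried out. The paper avoids an ad hoc sphere argument by invoking Lickorish's prime-tangle theorem: it cuts the changed knot along the connected-sum sphere into tangles $T_1$ and $T_2$ and shows each is a prime tangle (non-trivial, and containing no non-trivial ball--arc pair), whence the union is prime. Non-triviality of $T_i$ is where ``not 2-bridge'' enters (a trivial $T_i$ capped by a trivial tangle would make $K_i$ 2-bridge), and the absence of local knots in $T_1$ is where primeness of $K_1$ and the no-summand-after-one-crossing-change hypothesis enter. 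The genuinely non-obvious step, entirely absent from your proposal, is showing $T_2$ has no non-trivial ball--arc pair: the paper forms an auxiliary link $L_2'$ (the once-changed $K_2'$ together with an unknot encircling the changed crossing), decomposes $L_2'$ as a union of tangles $A\cup B$, proves $A$ is prime by doubling it to get a prime non-split alternating link and applying Menasco's theorem, proves $B$ is prime from the hypotheses, and then deduces primeness of $L_2'$ and hence the absence of local knots in $T_2$. Without this (or a worked-out substitute for it), the proposal does not establish the theorem.
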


We suspect that, in the above theorem, it is not necessary to assume that the summands are not 2-bridge. Note that it is very reasonable to make the hypothesis about the existence of $u(K_i)$ crossing changes as in the statement of the theorem. Certainly, $K_i$ has a sequence of $u(K_i)$ crossing changes taking it to the unknot, and if we change any of these crossings, the result is a knot $K'$ with $u(K') = u(K_i) -1$. So if $K'$ had $K_i$ as a summand, then this would contradict Conjecture \ref{conj:additivity} and hence Conjecture \ref{conj:strong}. 

However, even after running the agent on millions of connected sums, we have not found a counterexample to the additivity of the unknotting number. 

\subsection{New unknotting numbers, assuming additivity.}
Conjecture~\ref{conj:additivity} has interesting consequences for the unknotting number of some prime knots. Suppose that we have a sequence of unknotting crossing changes of length $u(J)$ for a knot $J$.
Then, if we change $n$ of these crossings, the resulting knot
must have unknotting number $u(J) - n$. Hence, if we start with a knot $K \# K'$ and find a sequence of $u(K) + u(K')$ crossing changes that takes it to the unknot, then, assuming Conjecture \ref{conj:additivity}, we can determine the unknotting number of all the intermediate knots in the sequence. Using this approach, we have obtained 43 at most 12-crossing prime knots with unknown unknotting numbers. This provides a method for computing the unknotting numbers of these 43 knots, assuming Conjecture~\ref{conj:additivity}. These 43 values all coincide with the largest possible unknotting number given in the KnotInfo database.
Conversely, if one of these at most 12-crossing prime knots had smaller unknotting number than the KnotInfo upper bound, we would obtain a counterexample to the additivity of the unknotting number.

\subsection{Hard unknot diagrams.} It is a major open problem in knot theory whether there is a polynomial-time unknot detection algorithm. We say that a diagram of the unknot is \emph{hard} if, in any sequence of Reidemeister moves to the trivial diagram, the crossing number has to first increase before it decreases. They are of particular interest because they might provide counterexamples to potential unknot detection algorithms. Hard unknot diagrams are difficult to construct, and previously no extensive dataset existed. Burton, Chang, L\"offler, Mesmay, Maria, Schleimer, Sedgwick, and Spreer~\cite{hard} have recently collected 21 hard unknot diagrams and 2 special infinite families from the literature, 10 of which are not actually hard according to our definition, as they can be simplified without increasing the crossing number (though a monotonically decreasing simplification might not exist). 

Initially, we tried to construct hard unknot diagrams using reinforcement learning, where a setter performs complicating Reidemeister moves to prevent a solver from unknotting via simplifying Reidemeister moves, with little success.

During our unknotting  experiments, we have found approximately 5.9 million knot diagrams between 9 and 75 crossings that SnapPy could not simplify even after 25 attempts. We have shown that 2.46 million of these are indeed hard and are not related by a sequence of R3 moves. Some of these have thousands of R3-equivalent diagrams; see Figure~\ref{fig:large-moduli}. Our dataset includes the first four hard unknot diagrams H, J, Culprit, and Goeritz from \cite{hard} of crossing numbers 9, 9, 10, and 11, respectively. The next previously known hard unknot diagram, the reduced Ochiai~II, has 35 crossings. The vast majority of the hard unknot diagrams that we have found have less than 35 crossings. 

\section{Some background on Machine Learning}

There are three major Machine Learning paradigms, namely, \emph{supervised learning} (SL), \emph{reinforcement learning} (RL), and \emph{unsupervised learning}. In this paper, we will focus on the first two.

In SL, we are given a labelled dataset. In other words, we know the values of a function at certain points. We split our dataset into a \emph{training set}, which is typically about 80\%, and a \emph{test set}. We would like to learn, or approximate, the function using only the training set such that the error (e.g., $L^2$-norm) is small on the whole dataset. 

The most classical example is linear regression. More generally, Hornik, Stinchcombe, and White~\cite{universal-approximation} have shown that  \emph{neural networks} (NNs) are universal function approximators, if one is allowed to vary the architecture. A neural network is a composition of a sequence of affine maps and some simple non-linearities in between, such as $\max(0, x)$ applied coordinate-wise. The network is trained using some variant of \emph{stochastic gradient descent}. One initialises the affine maps, for example, randomly, then computes an approximation of the gradient of the error on a subset of the training set (whose cardinality is called the \emph{batch size}), and changes the affine maps in the direction of the negative gradient according to some \emph{step size} (or \emph{learning rate}). This is repeated a number of times, and a pass through the whole training set is called an \emph{epoch}.

There have been several applications of SL to knot theory in recent years, mostly aimed at finding connections between knot invariants. See, for example, Hughes~\cite{Hughes} and Davies et al.\,\cite{Nature}.

RL is a machine learning paradigm where an agent (in our case, computer software) learns to perform actions to maximise a cumulative reward while interacting with an environment. Typical examples are provided by the games of chess and Go, self-driving cars, and humanoid robots that learn to walk. Training a SL model is often much simpler than RL. There have been only two applications of RL to topology so far. Gukov, Halverson, Ruehle, and Su{\l}kowski \cite{unknotting} focused on unknot recognition. Furthermore, Gukov, Halverson, Manolescu, and Ruehle~\cite{ribboning} have developed RL agents that search for ribbon disks for a knot. In this rest of this section, we give an overview of RL and imitation learning.

\subsection{Markov decision processes.} 
Mathematically, RL is a framework which can be used to find solutions to problems that are formulated as \emph{Markov decision processes}. A Markov decision process is a tuple $(S, A_s, P_a, R_a)$, where
\begin{itemize}
	\item $S$ is a set of \emph{states}, 	
	\item $A_s$ is the set of \emph{actions} available from state $s \in S$,
	\item $P_a(s, s')$ is the \emph{probability} that $a \in A_s$ leads to state $s' \in S$, and
	\item $R_a(s, s')$ is the immediate \emph{reward} after transitioning from state $s$ to $s'$ via action $a$.
\end{itemize}

In our case, $S$ consists of certain invariants of diagrams that can be obtained by crossing changes from a fixed knot diagram $\cD$. An action is changing a crossing of $\cD$. A crossing change is deterministic, so $P_a(s, s')$ is 1 if a crossing change $a \in A_s$ is applied to the diagram associated to state $s$ and results in the state $s'$
and is 0 otherwise. 
If $a \in A_s$ leads to $s'$, then the reward $R_a(s, s')$ is $1$ if $s'$ is a diagram of the unknot $U$ and is $0$ otherwise.

The \emph{policy} $\pi$ is a potentially probabilistic mapping from $S$ to $A$. In state $s \in S$, the agent performs action $\pi(s) \in A_s$. The objective of training an RL agent is to choose $\pi$ to maximise the \emph{state value function} 
\[
V^\pi(s) := E \bigg( \sum_{t=0}^{\infty} \gamma^t R_{\pi(s_t)}(s_t, s_{t+1}) \bigg), 
\]
where $s_0 = s$, $s_{t+1} \sim P_{\pi(s_t)}(s_t, s_{t+1})$, and $\gamma \in [0,1]$ is called the \emph{discount factor}. This is the expected value of the total discounted reward the agent obtains using the policy $\pi$. The discount factor $\gamma$ determines how much weight is given to future rewards.

\subsection{Q-learning.} A classical approach to solving Markov decision processes is Q-learning, where `Q' stands for `quality'. Its goal is to learn the \emph{state-action value} $Q(s, a)$, which is the expected discounted  total reward if action $a \in A_s$ is taken from state $s \in S$, and the policy is followed thereafter. At time $t$, the agent selects action $a_t$, observes a reward $r_t$, and enters state $s_{t+1}$. We initialise $Q$ randomly and updated it via the \emph{Bellman equation}
\[
Q^{\text{new}}(s_t, a_t) := Q(s_t, a_t) + \alpha \left( r_t + \gamma \max_{a \in A_{s_{t+1}}} Q(s_{t+1}, a) - Q(s_t, a_t) \right),
\]
where $\alpha \in (0, 1]$ is the \emph{learning rate} or \emph{step size}. 

When selecting an action, we face the dilemma of \emph{exploration versus exploitation}; i.e., whether we explore the environment to potentially obtain a higher cumulative reward, or rely on the Q-values that we have learned so far. The $\varepsilon$-greedy policy blends the two approaches by performing a random action with probability $\varepsilon$ and an action $a_t \in A_{s_t}$ that maximises $Q(s_t, a_t)$ with probability $1 - \varepsilon$. 

A modern version of Q-learning is \emph{deep Q-learning}. Here, an artificial neural network $f \colon \R^S \to \R^A$ learns the Q-values, where  
\[
f(e_s) \cdot e_a = Q(s, a) 
\]
for the basis vector $e_s$ of $\R^S$ corresponding to the state $s \in S$ and the basis vector $e_a$ of $\R^A$ corresponding to the action $a \in A$. The weights of the network are updated using the Bellman equation and gradient descent.

\subsection{Importance weighted actor-learner architecture (IMPALA)} For the majority of our experiments, we used the IMPALA~\cite{IMPALA}  reinforcement learning architecture, which is a distributed agent developed for parallelisation. It learns the policy $\pi$ and the state value function $V^\pi$ via stochastic gradient ascent. Acting and learning are decoupled. A set of actors repeatedly generate trajectories of experience. One or more synchronous learners use this experience to learn the policy $\pi$. The policy the actors use lags behind the learners', which is corrected using a method called V-trace.

\subsection{Imitation learning} During imitation learning, an agent tries to learn a policy that mimics expert behaviour. It does not rely on a reward function. The simplest approach is \emph{behavioural cloning}, where a supervised learning model, usually a neural network, learns to map environment observations to (optimal) actions taken by an expert.

We mention two other, more sophisticated approaches to Imitation Learning. \emph{Adversarial imitation}, due to Ho and Ermon~\cite{Ho-Ermon}, is a minimax game between two AI models (Generative Adversarial Nets): the \emph{agent policy model} produces actions  using RL to attain the highest rewards from a \emph{reward model} that indicates how expert-like an action is, while the reward model attempts to distinguish the agent policy behaviour from expert behaviour. In the case of \emph{inverse Q-learning}, due to Garg et al.\,\cite{inverse-Q}, a single Q-function is learned. The policy is obtained by choosing the action with the highest Q value, and one can recover the reward from $Q$.

\section{Learning to unknot}

\subsection{Imitation learning and unknotting}\label{sec:imitation-unknotting} We used behavioural  cloning based on a NN to predict for each crossing the probability that it lies in a minimal unknotting set. If the predicted probability for a crossing $c$ is larger than $0.5$, then we interpret this such that $c$ does lie in a minimal unknotting set. Expert data was obtained from brute-forced minimal unknotting sets of randomly generated knots up to 30 crossings.

The distribution of the percentage of crossings that lie in a minimal unknotting set for diagrams in the dataset is shown in Figure~\ref{fig:fraction-valid-crossings}. The unknotting numbers of the diagrams range between 1 and 8.


Since knot diagrams are hard to feed into a neural network, the main features we used were invariants of the diagram, together with invariants of all diagrams obtained by changing one crossing. We call this \emph{one step lookahead}, which we also used in our RL agent. We computed invariants using SnapPy~\cite{SnapPy}.

We experimented with several collections of invariants of knot diagrams. These included the following:
\begin{enumerate}
	\item\label{it:basic} signature, determinant, writhe, and whether the diagram is alternating,
	\item\label{it:twist} number and lengths of twist regions, together with crossing signs,
	\item\label{it:Alex-Jones} Alexander polynomial $\Delta_K(t)$ and Jones polynomial $V_K(t)$ (coefficients, minimal and maximal degrees, and certain evaluations, including at some roots of unity, of the polynomial and its first three derivatives),
	\item\label{it:HFK} invariants computed by the \texttt{knot\char`_floer\char`_homology} function of SnapPy, written by Ozsv\'ath and Szab\'o ($\tau$, $\nu$, $\epsilon$, Seifert genus, is fibred, is L-space knot, rank in each homological grading, modulus, rank in each Alexander grading, total rank),
	\item\label{it:hyperbolic} whether the knot is hyperbolic, in which case we considered the volume, longitudinal translation, and natural slope.
\end{enumerate} 

Many invariants either failed to compute for a significant percentage of diagrams ($\ge 20\%$), such as the hyperbolic invariants in \eqref{it:hyperbolic}, or were slow to compute for large diagrams ($\ge 100$ crossings), such as the knot Floer homology invariants in \eqref{it:HFK}. For RL, it is important that the environment is fast. Hence, we focused on the simple invariants in \eqref{it:basic} and the polynomial features in \eqref{it:Alex-Jones}.

\begin{figure}
	\includegraphics[width=10cm]{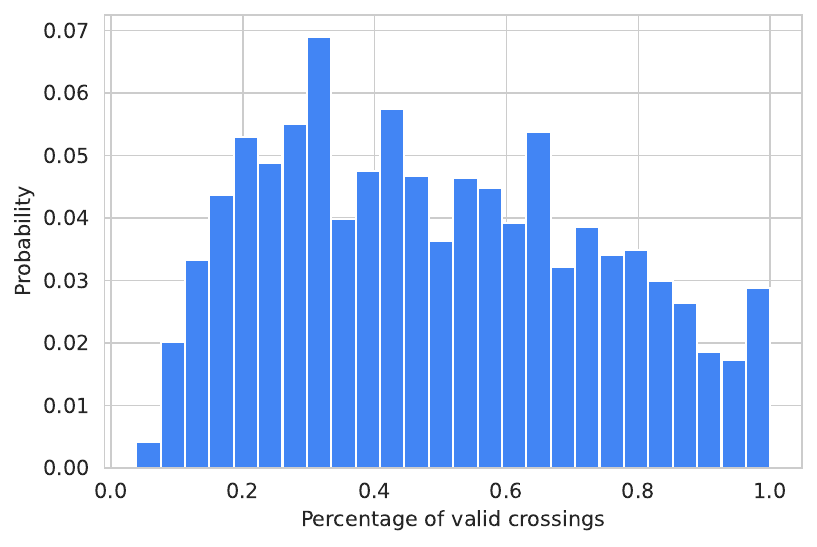}
	\caption{The distribution of the percentage of crossings that lie in a minimal unknotting set for diagrams in the dataset.}\label{fig:fraction-valid-crossings}
\end{figure}

To find out which subset of invariants to use, we set up a Supervised Learning experiment to predict whether a crossing lies in a minimal unknotting sequence. We considered neural networks with hidden layers of size [256, 256], [1024, 1024, 1024], and [2048, 2048, 2048], respectively. Learning was done in 10k steps, with learning rate 0.01 or 0.005, momentum 0.9, and batch size 2048. Accuracy ranged between 84.6\% and 88.1\%, with baseline 50.1\% when predicting the most commonly occurring crossing in unknotting sets (crossing 2 is in a minimal unknotting set for 50.1\% of diagrams in the dataset). Note that the indexing of the crossings is arbitrary in the dataset. The highest accuracy was achieved by hidden layers of size [1024,1024,1024] and learning rate 0.01 using all features. When only using the features in \eqref{it:Alex-Jones} on the same architecture, we obtained almost identical, 87.97\% accuracy.

Including the sum of the absolute values of the coefficients of the Alexander polynomial further improved the performance. Using both the Jones and the Alexander polynomials gave higher accuracy than the  Alexander polynomial only. This was also the case for the RL agent discussed in Section~\ref{sec:RL-unknotting}, and was particularly pronounced when it was forced to switch some inter-component crossings for connected sums (see Definition \ref{Def:inter-component} for the definition of an inter-component crossing).
Saliency analysis of the Alexander and Jones polynomial features revealed that the evaluations of $V_K(t)$ and $V_K'(t)$ near $1$ were most important in making the prediction.

One potential explanation of why the Jones polynomial works better than the Alexander polynomial is that the Jones polynomial is conjectured to detect the unknot, while the Alexander polynomial does not. The \emph{algebraic unknotting number} $u_a(K)$ of a knot $K$, due to Murakami~\cite{Murakami}, is the minimal number of crossing changes required to reach a knot with vanishing Alexander polynomial. Clearly, $u_a(K) \le u(K)$. Borodzik and Friedl~\cite{Borodzik-Friedl} have shown that $u_a(K)$ agrees with an invariant $n(K)$ that they defined using the Blanchfield form of $K$, and which can be computed in many examples. 
However, this in itself does not seem to completely explain why the Jones polynomial is the most useful feature for guiding the Imitation Learning and RL agents. Hence, it seems the Jones polynomial contains yet unobserved unknotting information.

\subsection{Reinforcement learning and unknotting}\label{sec:RL-unknotting} Our goal was to train an RL agent that performs crossing changes in a fixed diagram $\cD$ to unknot it, giving an \emph{upper bound} on $u(\cD)$. We used the IMPALA architecture. The resulting trained agent can determine $u(\cD)$ even when $c(\cD) \approx 200$, in which case brute-forcing is not possible. 

\begin{figure}
	\begin{center}
	\input{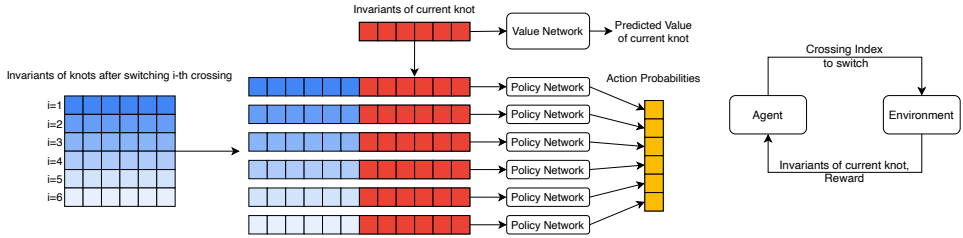}
	\end{center}
	\caption{The architecture of the RL agent. Input features are invariants of the current diagram, together with invariants of all neighbouring diagrams.} \label{fig:agent}
\end{figure}

The agent architecture is shown in Figure~\ref{fig:agent}.
As for imitation learning, the features we tried were invariants of the diagram (typically the Alexander and Jones polynomial features from \eqref{it:Alex-Jones} of Section~\ref{sec:imitation-unknotting}), together with invariants of all diagrams one can obtain via a single crossing change (one step lookahead). For each invariant, an additional boolean feature showed whether the invariant calculation had failed. Some invariants had a large range, especially Jones polynomial evaluations, which were clipped to lie in a fixed range. The agent was allowed to try a fixed number of crossing changes. It was trained on the dataset explained in Section~\ref{sec:additivity} consisting of knots with known unknotting numbers from KnotInfo, torus knots, quasipositive knots, and random knots with large signatures, and connected sums of these. We sampled randomly from these classes.

At the beginning of each episode, the environment samples a new knot and returns its invariants described above to the agent as a vector. Then, at each step, the agent computes an action; i.e.\ the crossing index that should be switched, by feeding the invariant vector through a neural network. The environment then performs the action by switching the crossing and returns the new invariant vector. An episode terminates if either the knot was unknotted successfully or if a maximum number of steps has been reached. If the unknot has been reached, a reward of $+1$ is returned, otherwise a reward of $0$.

Initially, we observed that the agent frequently revisited the same diagram. If a crossing appeared multiple times in an unknotting sequence, we counted it modulo 2. Furthermore, we experimented with disallowing revisiting the same Alexander or Jones polynomial, which decreased the percentage of unsolved diagrams for 50--100-crossing knots from 0.49\% to 0.05\% when using both the Alexander and the Jones polynomial. There was also a small increase in performance when disallowing Jones revisits only. When using an agent based on Alexander polynomial features only, disallowing Alexander polynomial revisits substantially improved performance, except when solving connected sums with forced inter-component crossings. When we allowed revisits, the agent sometimes found shorter unknotting sequences (after counting the number of times each crossing change was made modulo 2). The Alexander only policy was 10 times faster than the one using the Jones polynomial. Running the agent 10 times on each diagram substantially increased performance. 

When comparing the performance of the Alexander and the Jones polynomials on a dataset of random knots, the RL agent using the Jones polynomial both as features and for disallowing revisits found shorter unknotting sequences, though sometimes the agent based on the Alexander polynomial worked better; see Figure~\ref{fig:Alex_vs_Jones}. The percentage of knots that the RL agent could not unknot was much lower when using the Jones polynomial. 

We also compared the Jones unknotting agent to various baselines. A naive baseline of randomly switching crossings is fast and simple to implement, but unlikely to find minimal unknotting sequences for diagrams with many crossings. Similarly to~\cite{ribboning}, we developed a Bayesian Optimisation (BO) based random agent. The agent chooses randomly from the following action categories: Reidemeister I move, reverse Reidemeister I move, Reidemeister II move, reverse Reidemeister II move, Reidemeister III move, and crossing switch. The random agent chooses one of these action categories by randomly sampling according to certain weights. The concrete action is then sampled uniformly from all valid moves from that category, e.g.\ from all valid crossing indices. Each crossing change carries a reward of $-1$ and the reward is $0$ otherwise, which is different from the reward function we used for the RL agent. We optimised the sampling weights using BO using the average reward on a heldout validation set as the fitness metric. We limited the weights to be integers between $1$ and $100$. The best fitness was achieved by weight vectors where weights for  simplifying moves (Reidemeister I or II) as well as neutral moves (Reidemeister III) were near $100$, while weights for moves which increase the crossing count (reverse Reidemeister I and II) or which incur a reward penalty (crossing switch) were assigned a weight near $1$. Intuitively, this means the optimisation procedure converged to a solution which always attempts to maximally simplify the knot diagram before making a crossing switch. 
We then implemented further baselines which follow this insight of first simplifying then switching a crossing. \textit{Simplify (random)} utilizes SnapPy to first simplify the knot diagram before a random crossing is switched. \textit{Simplify (min.\ crossing)} always switches the crossing after which the resulting diagram has the least crossings after simplification. 
Figure~\ref{fig:unknotting_performance_comparison} shows how different unknotting strategies compare on a test set of $100,000$ diagrams. The RL agent utilizing the Jones polynomial manages to unknot more knots than the other strategies and is also capable of producing more efficient unknotting sequences.

\begin{figure}
	\begin{tikzpicture}  
		
		\begin{axis}  
			[  
			bar width = 1 cm,
			ybar,  
			enlargelimits=0.15,  
			ylabel={Number of diagrams},
			symbolic x coords={Jones better, equal, Alexander better}, 
			xtick=data,  
			nodes near coords,
			nodes near coords align={vertical},  
			]  
			\addplot coordinates {(Jones better,71699) (equal,29232) (Alexander better, 7069) };  
			
		\end{axis}  
	\end{tikzpicture}  
	\caption{Comparison of the performance of the Alexander and the Jones RL agents on a dataset of random knot diagrams.}
	\label{fig:Alex_vs_Jones}
\end{figure}

\begin{figure}
    \centering
    \begin{subfigure}[b]{0.48\textwidth}
    \caption{}
	\includegraphics[width=\textwidth]{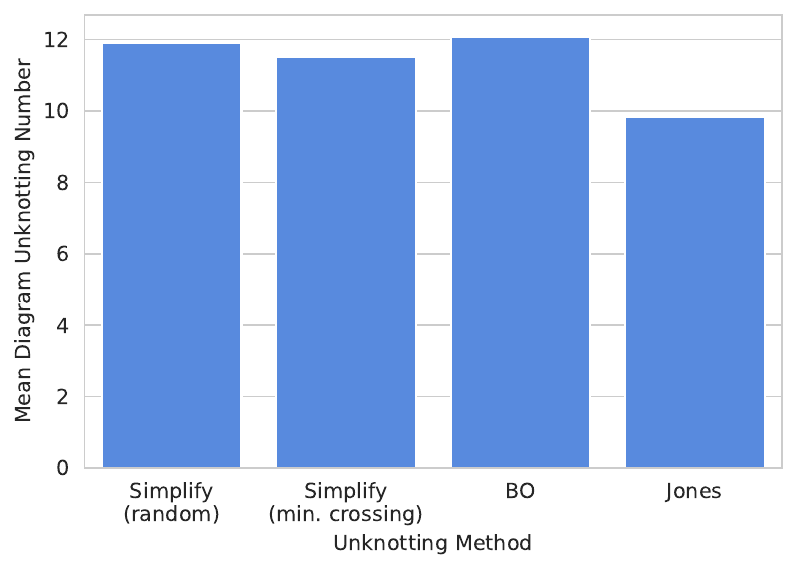}
	\end{subfigure}
	\begin{subfigure}[b]{0.48\textwidth}
	\caption{}
	\includegraphics[width=\textwidth]{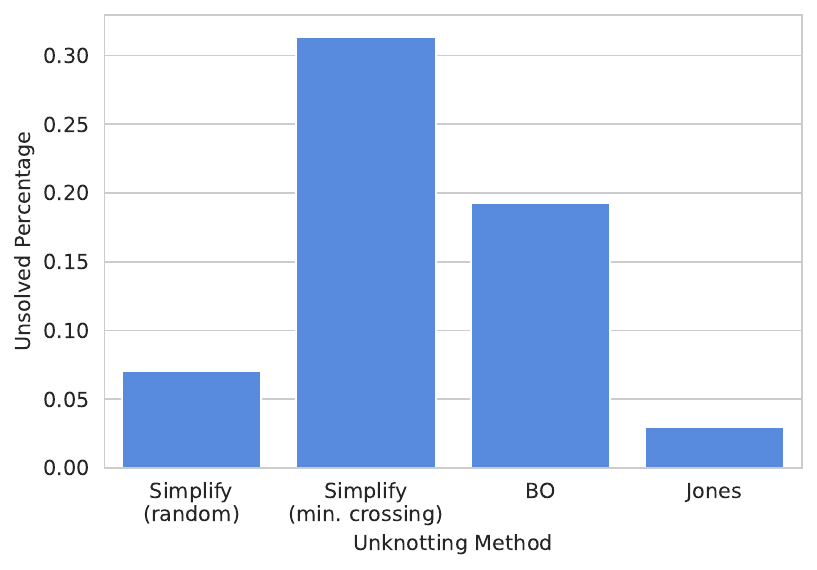}
	\end{subfigure}
	\caption{Comparison of the performance of different unknotting strategies. (A) shows the mean diagram unknotting number estimated by the different strategies for solved knots. (B) shows the percentage of knots each strategy was unable to unknot.}
	\label{fig:unknotting_performance_comparison}
\end{figure}

\subsection{Braids}\label{sec:braids} An alternative way to represent knots is as braid closures. This has a number of advantages. The Jones polynomial computation is exponential time. However, a polynomial-time algorithm exists for braid closures if we bound the braid index \cite{MortonShort}.
However, even this gets too slow for RL when the braid index is over 6--8. Hence, we considered braids of at most 8 strands. Furthermore, the slice--Bennequin inequality, due to Rudolph~\cite{Rudolph}, building on work of Kronheimer and Mrowka, provides easy-to-compute bounds on the unknotting number: 
\[
|w(\beta)| - n(\beta) + 1 \le 2u\big(\widehat{\beta}\big) \le c\big(\widehat{\beta}\big) + 1 - n(\beta),
\]
where $w(\beta)$ is the writhe and $n(\beta)$ is the number of strands of the braid word $\beta$, and $c(\widehat{\beta})$ is the crossing number of the braid closure $\widehat{\beta}$. We verified that our RL agent obtained statistically better bounds on the unknotting number than the slice--Bennequin bounds.

Let $\beta_1$ and $\beta_2$ be braids of indices $k_1$ and $k_2$, respectively. Furthermore, let $\sigma_{k_1}$ be the generator of the braid group of index $k_1 + k_2$ that swaps strands $k_1$ and $k_1 + 1$. Then $\widehat{\beta_1 \sigma_{k_1} \beta_2} = \widehat{\beta}_1 \# \widehat{\beta}_2$.
We further mixed $\widehat{\beta_1 \sigma_{k_1} \beta_2}$ by inserting braid words that are equivalent to the identity, and which mix strands between the two summands. We chose 7 such braid words to keep the size of the resulting braids manageable by the RL agent.
See, for example, Figure~\ref{fig:braid}. This method seemed to result in different mixing between the connected summands than overlaying them (see Section~\ref{sec:additivity} for a discussion on overlaying diagrams).

\begin{figure}
	\includegraphics[width=\textwidth, height=2cm]{muddled_braid_with_hyperbolic_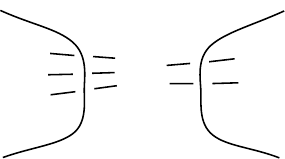}
	\caption{One can mix a diagram of a connected sum of two braid closures by inserting identity braid words. The inter-component crossings that yield hyperbolic knots are shown in red.}\label{fig:braid}
\end{figure}

We furthermore trained an RL agent which operated directly on braid words rather than using invariants. In this case, we used a transformer architecture~\cite{VaswaniShazeer}, an ML architecture designed to work on sequences. The input to the model is a sequence of integers representing the braid word, and the output is -- akin to the invariant-based agent -- a probability distribution over which crossing to switch. This had the big advantage of being invariant-free and hence very fast. It performed well on smaller braids ($\le 60$ crossings, 3--8 strands), but struggled to unknot larger braids efficiently. Future work could investigate invariant-free unknotting agents further.

A potential direction that we have not explored is to do Imitation Learning on the unknotting trajectories from the braid agent. One would filter trajectories that are close to minimal, and augment the dataset by rotating and mirroring the braid words, and by inserting braid identities.

\section{Additivity of the unknotting number}\label{sec:additivity}

We set out to search for a counterexample to the additivity of the unknotting number using our IMPALA agent. Our strategy was to first find a large dataset $S$ of knots with known unknotting numbers. We can assume that $\sigma(K) \ge 0$ for every $K \in S$ by mirroring it if $\sigma(K) < 0$. We then construct non-trivial diagrams of connected sums $K \# -K'$ for $K$, $K' \in S$. If the RL agent can unknot $K \# -K'$ using $u(K) + u(K') - 1$ crossing changes, then we are done.

\begin{figure}
	\includegraphics[width=0.49\textwidth]{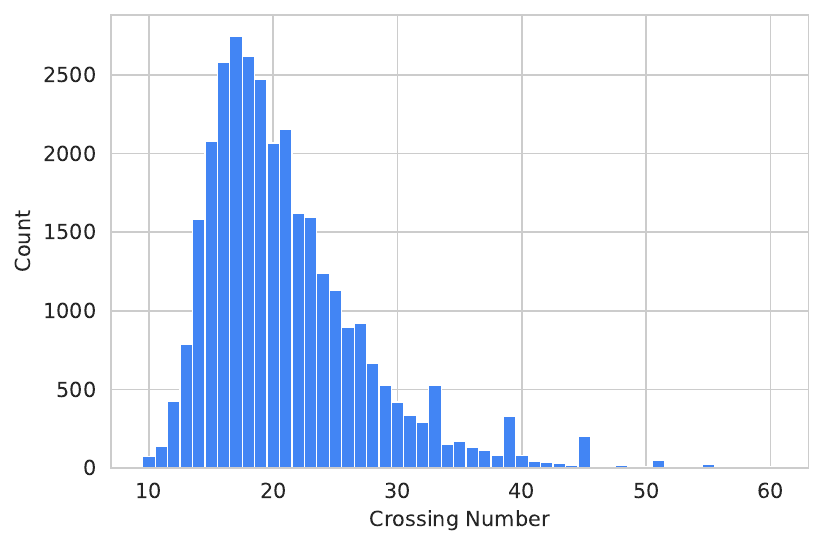}
	\includegraphics[width=0.49\textwidth]{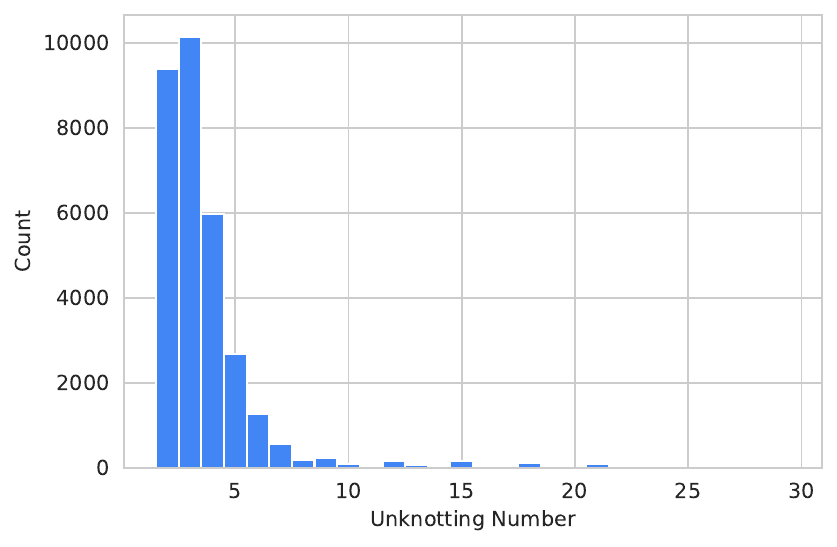}
	\includegraphics[width=0.49\textwidth]{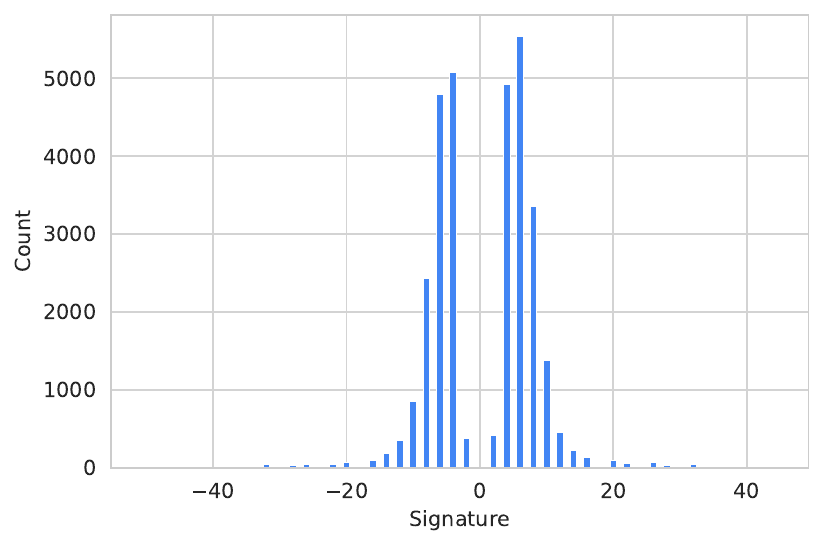}
	\caption{The distribution of random knots $K$ in our dataset by SnapPy simplified crossing number, by $u(K)$, and by $\sigma(K)$.}\label{fig:random-knots}
\end{figure}

To construct $S$, we obtained $u(K)$ for 31k random knots between 10 and 60 crossings and 26k quasipositive knots betwen 10 and 50 crossings in their SnapPy simplified diagram with large $|\sigma(K)|$, and where the upper bounds given by the RL agent and the lower bounds from knot Floer homology coincided.
We found it difficult to generate random knots $K$ with $|\sigma(K)|$ large. See Figure~\ref{fig:random-knots} for the distribution of the crossing number of a SnapPy simplified diagram and the unknotting numbers and signatures for the random knots in our dataset. For the lower bounds, we used the signature, the Ozsv\'ath--Szab\'o $\tau$ invariant, and $|\nu(\pm K)|$, which is sometimes 1 bigger than $|\tau(K)|$. (Note that $\sigma$ and $\tau$ are symmetric with respect to mirroring, while $\nu$ is not.) We also added torus knots and at most 12-crossing knots with known unknotting number from KnotInfo.

\begin{figure}
	\includegraphics[width=\textwidth]{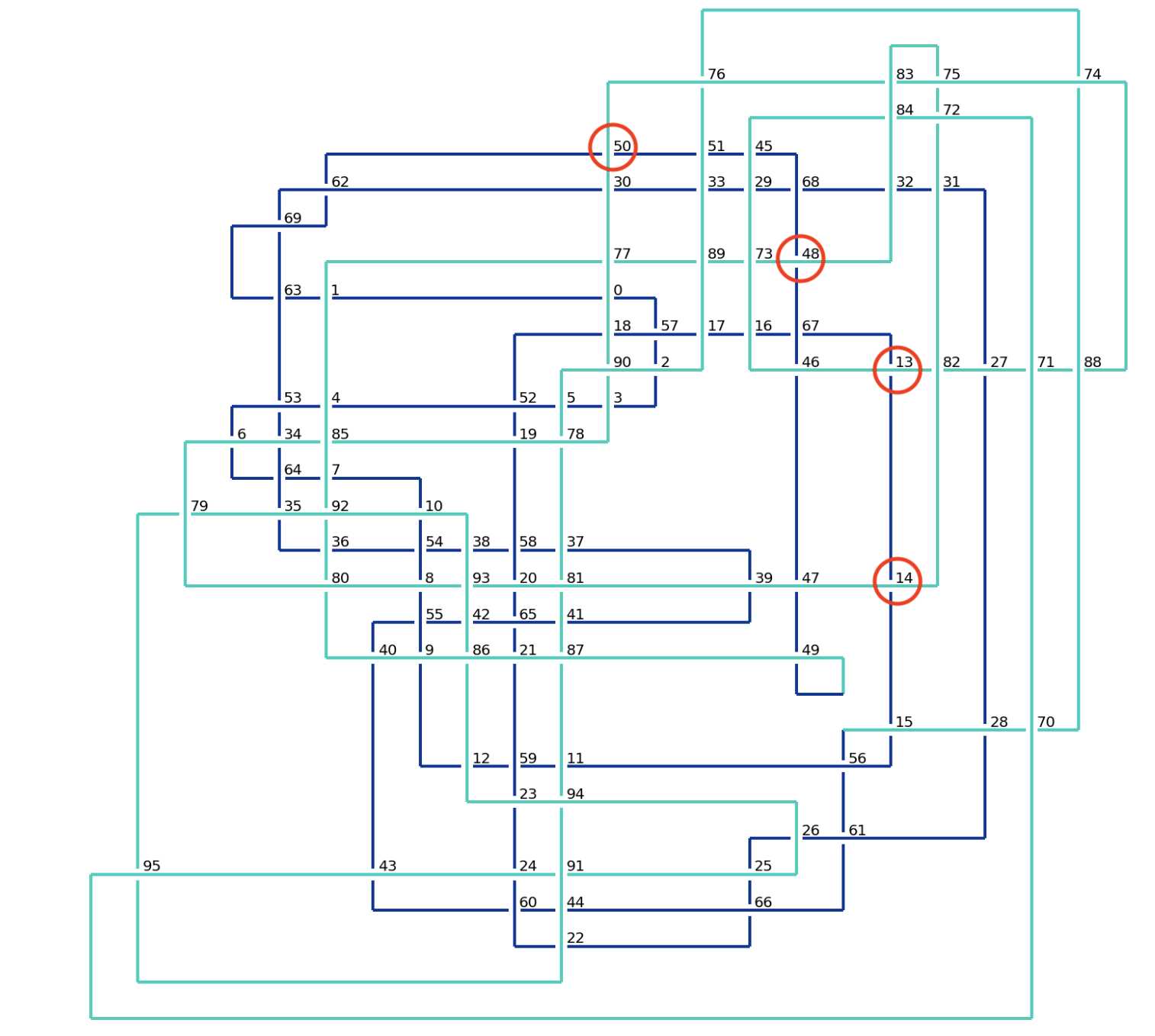}
	\caption{An overlaid diagram of a connected sum that can be unknotted by the inter-component crossings 13, 14, 48, and 50.}	\label{fig:strong-counterexample}
\end{figure}

In order to construct a non-trivial diagram of $K \# -K'$, we \emph{overlaid} the diagrams of $K$ and $K'$; see Figure~\ref{fig:strong-counterexample}. Note that, in an overlay sum, every crossing arc only intersects the connected sum sphere at most once.
Hence, in approximately 100k cases, we also performed \emph{random Reidemeister moves} to further mix the components. We did not always do this as the limit of our RL agent was around 200 crossings. For some experiments, we also considered connected overlay sums of 3 at most 12-crossing knots, with crossing number up to 120. In addition, we also searched among connected sums obtained from braids by inserting identity braid words, as explained in Section~\ref{sec:braids}. See Figure~\ref{fig:agent-performance} for the performance of the RL agent based on the Jones polynomial on connected sums, where the $x$-axis shows the sum of the unknotting numbers of the summands.

\begin{figure}
	\includegraphics[width=\textwidth]{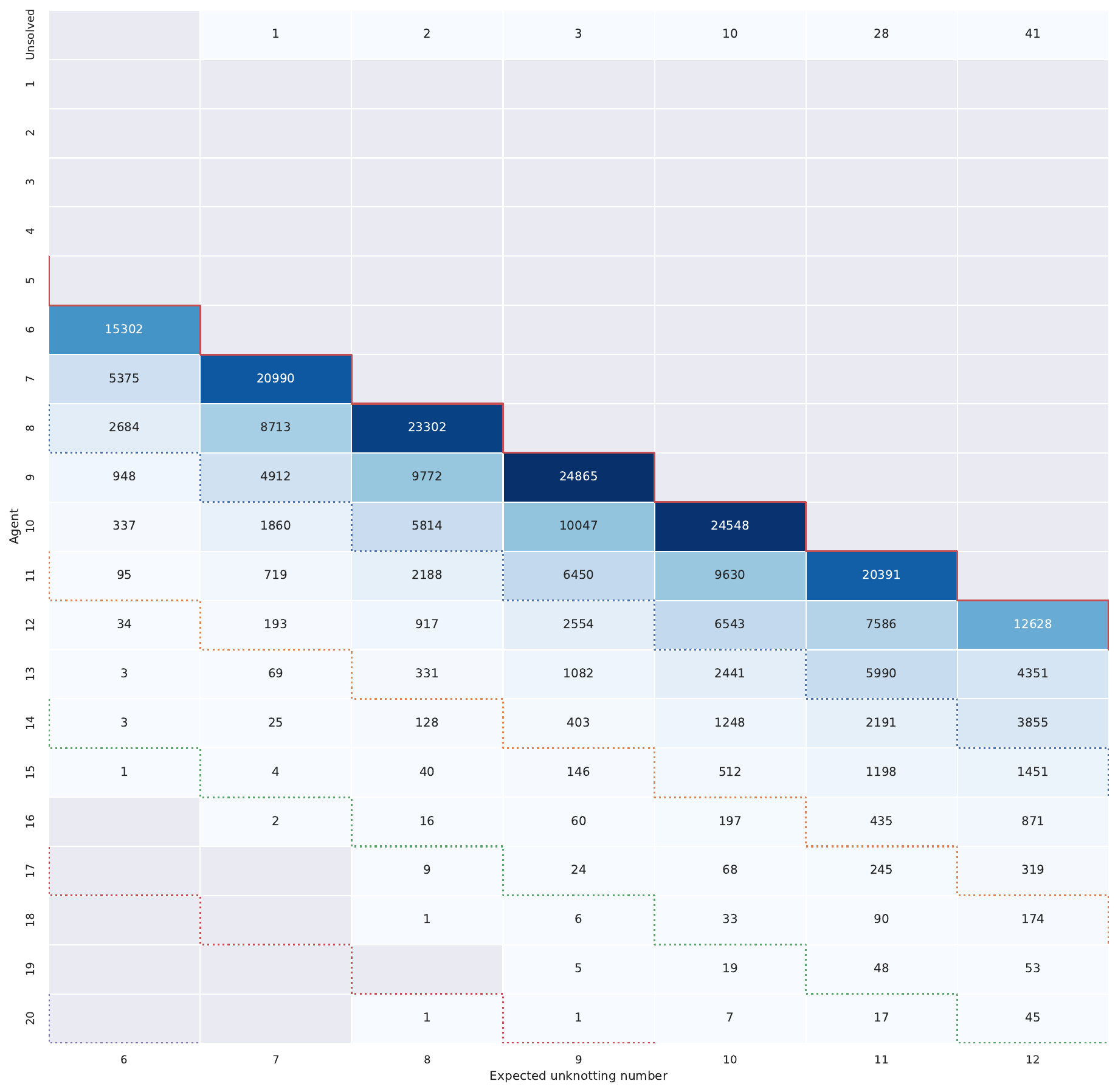}
	\caption{The performance of the RL agent based on the Jones polynomial on connected sums and no forced inter-component crossing changes. The four dashed lines indicate a difference of 3, 6, 9, and 12, respectively, between the diagram unknotting number found by the agent and the unknotting number expected assuming additivity.}\label{fig:agent-performance} 
\end{figure}

Instead of this stochastic approach to searching for a counterexample, one could train a different RL agent that searches for non-trivial unknotting crossing arcs in a fixed diagram, analogously to the approach of Gukov, Halverson, Manolescu, and Ruehle~\cite{ribboning} to finding ribbon disks. Note that a Bayesian random walker outperformed all their RL architectures, supporting the power of a stochastic approach.

Despite an extensive search, we have not found a counterexample to the additivity of the unknotting number. There are several potential interpretations of this. We could view it as some evidence supporting that the conjecture is true. Alternatively, a stochastic search might not be sufficient to come across a counterexample, if one exists. This is supported by the fact that it seems difficult to find an unknotting number 2 diagram of the knot 13n3370 featuring in the counterexample to the Bernhard--Jablan conjecture due to Brittenham and Hermiller~\cite{Brittenham-Hermiller}. A third explanation might be that, even though we have produced diagrams of connected sums with unknotting number at most $u(K) + u(K') - 1$, our RL agent was not good enough to find a minimal unknotting set. In the vast majority of cases, it did produce unknotting sets of size $u(K) + u(K')$, many of which even included inter-component crossings; see Figure~\ref{fig:strong-counterexample}. We will discuss this in more detail in the next section. 

If there is a counterexample, some crossings must be between the two components. Hence, we performed experiments where, in a dataset of 60 million knots, we forced the agent to change 2--5 inter-component crossings. For each knot, we applied the RL agent 10 times. In 25\% of the cases, it found unknotting sets of size $u(K) + u(K')$ containing the specified inter-component crossings. However, for 2\% of the diagrams, it did not find any unknotting set in the prescribed number of steps. This is not surprising, as the forced crossing changes might not be part of a minimal unknotting set. The above indicates that the RL agent is actually rather good at finding (close to) minimal unknotting sets in a given diagram.

\subsection{Strong conjecture}\label{sec:strong} While we have not found a counterexample to the additivity of the unknotting number, we have obtained counterexamples to the stronger form, Conjecture~\ref{conj:strong}. Recall that this states that, in every collection of unknotting crossing arcs for $K \# K'$, there is one that can be isotoped to be disjoint from the connected sum sphere. 

We now explain why Conjecture~\ref{conj:strong} implies the additivity of unknotting number (Conjecture~\ref{conj:additivity}). We must show that $u(K \# K') = u(K) + u(K')$ for knots $K$ and $K'$. We use induction on $u(K \# K')$. Let $\alpha_1, \dots, \alpha_n$ be a collection of crossing arcs for $K \# K'$ with $n = u(K \# K')$ and such that changing these crossings gives the unknot. Assuming Conjecture~\ref{conj:strong}, we can isotope these arcs so that one, $\alpha_1$ say, is disjoint from the 2-sphere specifying the connected sum. Say that it lies on the side of the 2-sphere corresponding to $K'$. Making the crossing change corresponding to $\alpha_1$ gives a connected sum $K \# K''$. Since $n$ was minimal, the remaining crossing arcs form a minimal unknotting sequence for $K \# K''$. So, $u(K \# K'') = n-1$, and therefore inductively, $u(K \# K'') = u(K) + u(K'')$. Now, $K''$ is obtained from $K'$ by changing a crossing, and therefore $u(K'') \geq u(K') - 1$. So,
\[
u(K \# K') = u(K \# K'') + 1 = u(K) + u(K'') + 1 \geq u(K) + u(K').
\]
The inequality $u(K \# K') \leq u(K) + u(K')$ holds trivially. Thus, we have shown that $u(K \# K') = u(K) + u(K')$, as required.

\begin{defn}
\label{Def:inter-component}
We say that a crossing change in a diagram of a connected sum is \emph{inter-component} if it results in a knot that is not a connected sum (e.g., hyperbolic) and is \emph{in-component} otherwise. 
\end{defn}

The motivation for this definition is as follows. As a result of Conjecture~\ref{conj:strong}, we are interested in crossing arcs that cannot be isotoped to be disjoint from the 2-sphere specifying the connected sum. However, this condition can be difficult to verify in practice. But, if a crossing arc for $K \# K'$ is inter-component in the above sense and $u(K)$, $u(K') > 1$, then it certainly cannot be isotoped to be disjoint from the connected sum 2-sphere.

We have found 20 counterexamples to Conjecture~\ref{conj:strong} by starting from diagrams where the RL agent found unknotting sets with several hyperbolic inter-component crossings and performing all in-component crossing changes from the unknotting set.
These produce connected sums where the remaining crossing changes result in a diagram of the unknot. In general, some of the inter-component crossings may become in-component after this procedure, but we have checked this is not the case for these 20 examples. One such example is shown in Figure~\ref{fig:strong-counterexample}, where the inter-component crossings 13, 14, 48, and 50 are unknotting.  

In many of our examples, changing the in-component crossings results in substantial simplification of the summands. By understanding the crossing arcs during this simplification led us to a more general method for constructing diagrams of connected sums that admit unknotting sets consisting of only inter-component crossings, which we recall from the introduction.

\strong*

The proof of this result will rely on Lickorish's work on prime tangles \cite{Lickorish-prime}. In fact, we will need to use a slight extension of his work, as follows.

\begin{defn}
A \emph{generalised tangle} is a pair $(B, t)$, where $B$ is a 3-ball and $t$ is 1-manifold properly embedded in $B$ that intersects $\partial B$ in $4$ points. So a generalised tangle consists of 2 arcs plus possibly some simple closed curves. When $t$ has no simple closed curve components, it is a \emph{tangle}. A tangle is \emph{trivial} when there is a homeomorphism from $B$ to $D^2 \times I$ taking $t$ to $\{ p_1, p_2 \} \times I$, where $p_1$ and $p_2$ are two points in the interior of $D^2$.
\end{defn}

The following definition is due to Lickorish~\cite{Lickorish-prime} in the case of tangles; we translate it verbatim to the setting of generalised tangles.

\begin{defn}
A generalised tangle $(B, t)$ is \emph{prime} if the following conditions both hold:
\begin{enumerate}
\item any 2-sphere in $B$ which meets $t$ transversely in $2$ points bounds a 3-ball intersecting $t$ in an unknotted arc;
\item it is not a trivial tangle.
\end{enumerate}
\end{defn}

The following result was proved by Lickorish~\cite{Lickorish-prime} for tangles. His proof extends immediately to generalised tangles, and is therefore omitted.

\begin{thm}
\label{Thm:LickorishGeneralised}
If two prime generalised tangles are glued via a homeomorphism between their boundary spheres that identifies the intersections with 1-manifolds, the result is a prime link.
\end{thm}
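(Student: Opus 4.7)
The plan is to adapt Lickorish's proof for ordinary tangles essentially word for word, checking that the presence of simple closed curve components in the tangles causes no new difficulty. Let $(B_1, t_1)$ and $(B_2, t_2)$ be prime generalised tangles and let $L \subset S^3$ be the link obtained by gluing them along a boundary homeomorphism that matches the four intersection points. Write $P$ for the image of $\partial B_i$ in $S^3$, so $P$ is a 2-sphere meeting $L$ transversely in four points. I need to establish two things: first, that $L$ is not a split link and is not the unknot; second, that every 2-sphere $S$ meeting $L$ transversely in two points bounds a 3-ball whose intersection with $L$ is a single unknotted arc.

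For the second (main) statement, I would isotope $S$ so as to minimise $|S \cap P|$ subject to transversality with both $P$ and $L$. If the intersection is empty, then $S$ lies entirely inside one $B_i$, and the conclusion is immediate from the primality of $(B_i, t_i)$. Otherwise, $S \cap P$ is a nonempty disjoint union of simple closed curves, each missing the four tangle endpoints. I would then apply the standard innermost disc reduction: pick an innermost curve $C$ on $S$, bounding a disc $D \subset S$ with interior disjoint from $P$; then $D$ lies inside one ball, say $B_1$, and $C$ also bounds a disc $D' \subset P$. Cases are distinguished by how many of the four tangle endpoints lie in $D'$, and by whether $D$ meets $t_1$. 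In each case, $D \cup D'$ is a 2-sphere in $B_1$ meeting $t_1$ transversely in at most two points; using primality of $(B_1, t_1)$ to find a ball bounded by $D \cup D'$ containing an unknotted arc (or nothing, if $D \cup D'$ misses $t_1$), one isotopes $D$ across this ball to push it off $P$, thereby reducing $|S \cap P|$ and contradicting minimality. Iterating empties the intersection and reduces to the case already handled.

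The only place the argument must be inspected for the generalised setting is the step in which the sphere $D \cup D'$ is assumed to bound a ball containing at most an unknotted arc of $t_1$: one needs to know that closed curve components of $t_1$ cannot interfere. But the definition of a prime generalised tangle is tailored precisely to this: any 2-sphere meeting $t_1$ transversely in two points (or in zero points, by irreducibility of the ball) bounds a 3-ball whose intersection with $t_1$ is a single unknotted arc (respectively, empty), so no closed component can lurk inside. Hence every innermost reduction step still works, and the main obstacle — potential entanglement with closed loops — dissolves by the definition. This is the reason Lickorish's argument transfers verbatim.

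Finally, for non-splitness and non-triviality of $L$: a splitting sphere or a sphere witnessing $L = U$ could, after minimising intersections with $P$, be pushed into one $B_i$ (using the arguments above), forcing $(B_i, t_i)$ to be a trivial tangle and contradicting the primality hypothesis. The main subtlety to be careful about throughout is that when a disc $D'$ on $P$ contains two tangle endpoints, one must check that the resulting candidate ball does not hide a knotted arc or an extra closed component — again, exactly the generalised-prime condition rules this out. No other changes to Lickorish's original argument are needed, which is why the authors omit the proof.
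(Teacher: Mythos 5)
Your overall route is exactly the one the paper intends: the paper gives no written proof, asserting only that Lickorish's innermost-disc argument for prime tangles extends verbatim to generalised tangles, and your sketch is a faithful reconstruction of that argument, correctly isolating the closed curve components as the only point where the generalisation needs checking.

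However, at precisely that one point your justification fails as stated. You claim that a 2-sphere meeting $t_1$ in zero points bounds, ``by irreducibility of the ball'', a 3-ball disjoint from $t_1$. Irreducibility of $B_1$ only gives that the sphere bounds a ball; it does not prevent that ball from swallowing closed components of $t_1$, and condition (1) of primality is, on its face, silent about spheres missing $t_1$ entirely. This matters because the innermost-disc isotopies must take place in the complement of the link: if the ball bounded by $D \cup D'$ contained a closed component of $t_1$, you could not push $D$ across it without sweeping $S$ through $L$, which would change the isotopy class of $S$ rel $L$, and the reduction of $|S \cap P|$ would be illegitimate. The fix is a short tubing argument: if a sphere $S'$ in $B_1$ disjoint from $t_1$ bounded a ball $V$ with $V \cap t_1 \neq \emptyset$ (necessarily a union of closed components, since any arc of $t_1$ has its endpoints on $\partial B_1$ and hence leaves $V$), tube $S'$ along an embedded path in $B_1 \setminus t_1$ to the boundary of a small ball around a point of an arc of $t_1$; the resulting sphere meets $t_1$ transversely in two points and bounds a ball whose intersection with $t_1$ is a small unknotted arc together with the closed components in $V$, contradicting condition (1). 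With this lemma in hand the conclusion you asserted is true, and it is also what is actually needed to rule out splitting spheres for $L$ (a splitting sphere pushed into $B_i$ would enclose components of the link, contradicting the lemma, rather than ``forcing $(B_i, t_i)$ to be trivial'' as you put it). Once this is repaired, the rest of your reduction goes through exactly as in Lickorish.
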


\begin{proof}[Proof of Theorem~\ref{thm:strong}]
	For $i \in \{1, 2\}$, isotope a sub-arc $a_i$ of $K_i$ such that it becomes straight and parallel to the $y$-axis in $\R^3$; see Figure~\ref{fig:crossings}. Furthermore, let $C_i$ be a set of disjoint unknotting framed crossing arcs of $K_i$ such that $|C_i| = u(K_i)$, and such that they are disjoint from $a_i$. Isotope the initial point $c(0)$ of each arc $c \in C_i$ into $a_i$. Then contract $c$ to a straight line segment via an ambient isotopy of $\R^3$ fixing $a_i$ such that $z(c(0)) > z(c(1))$, where $z \colon \R^3 \to \R$ is the $z$-coordinate function (this can be achieved by performing a finger move on $K_i$ by moving $c(1) \in K_i$ along $c$). We finally shorten and move the vertical arcs in $C_i$ so close to each other such that the corresponding crossings in the diagram $\cD_i$ obtained by perturbing $K_i$ and projecting it onto the $(x,y)$-plane are consecutive along the projection of $a_i$. Let $p_i \in a_i$ be a point such that $y(p_i) < y(c(0))$ for every $c \in C_i$. We take the connected sum of $K_1$ and $K_2$ at $p_1$ and $p_2$. We denote the resulting diagram of $K_1 \# K_2$ by $\cD$. See the upper left of Figure~\ref{fig:connected-sum}. In $K_1 \# K_2$, the arcs $a_1$ and $a_2$ will become arcs $a_+$ and $a_-$, where $a_+$ is the upper and $a_-$ the lower horizontal strand in the upper left of Figure~\ref{fig:connected-sum}. From now on, by a slight abuse of notation, $C_1$ and $C_2$ will denote sets of crossings rather than crossing arcs. Note that $a_+$ contains the crossings $C_1 \cup C_2$.
	
	\begin{figure}
\begingroup%
  \makeatletter%
  \providecommand\color[2][]{%
    \errmessage{(Inkscape) Color is used for the text in Inkscape, but the package 'color.sty' is not loaded}%
    \renewcommand\color[2][]{}%
  }%
  \providecommand\transparent[1]{%
    \errmessage{(Inkscape) Transparency is used (non-zero) for the text in Inkscape, but the package 'transparent.sty' is not loaded}%
    \renewcommand\transparent[1]{}%
  }%
  \providecommand\rotatebox[2]{#2}%
  \newcommand*\fsize{\dimexpr\f@size pt\relax}%
  \newcommand*\lineheight[1]{\fontsize{\fsize}{#1\fsize}\selectfont}%
  \ifx\svgwidth\undefined%
    \setlength{\unitlength}{136.56873117bp}%
    \ifx\svgscale\undefined%
      \relax%
    \else%
      \setlength{\unitlength}{\unitlength * \real{\svgscale}}%
    \fi%
  \else%
    \setlength{\unitlength}{\svgwidth}%
  \fi%
  \global\let\svgwidth\undefined%
  \global\let\svgscale\undefined%
  \makeatother%
  \begin{picture}(1,0.55742878)%
    \lineheight{1}%
    \setlength\tabcolsep{0pt}%
    \put(0,0){\includegraphics[width=\unitlength,page=1]{crossings.pdf}}%
    \put(0.73519893,0.1459649){\makebox(0,0)[lt]{\lineheight{1.25}\smash{\begin{tabular}[t]{l}$a_2$\end{tabular}}}}%
    \put(0.16261561,0.13749562){\makebox(0,0)[lt]{\lineheight{1.25}\smash{\begin{tabular}[t]{l}$a_1$\end{tabular}}}}%
    \put(0.09976661,0.51968856){\makebox(0,0)[lt]{\lineheight{1.25}\smash{\begin{tabular}[t]{l}$K_1$\end{tabular}}}}%
    \put(0.01024578,0.28419054){\makebox(0,0)[lt]{\lineheight{1.25}\smash{\begin{tabular}[t]{l}$C_1$\end{tabular}}}}%
    \put(0.86219413,0.29804342){\makebox(0,0)[lt]{\lineheight{1.25}\smash{\begin{tabular}[t]{l}$C_2$\end{tabular}}}}%
    \put(0.79471636,0.51507095){\makebox(0,0)[lt]{\lineheight{1.25}\smash{\begin{tabular}[t]{l}$K_2$\end{tabular}}}}%
    \put(0,0){\includegraphics[width=\unitlength,page=2]{crossings.pdf}}%
    \put(0.32833809,0.17105918){\makebox(0,0)[lt]{\lineheight{1.25}\smash{\begin{tabular}[t]{l}$p_1$\end{tabular}}}}%
    \put(0.59385062,0.1802944){\makebox(0,0)[lt]{\lineheight{1.25}\smash{\begin{tabular}[t]{l}$p_2$\end{tabular}}}}%
  \end{picture}%
\endgroup%

		\caption{The knots $K_i$ with the vertical arcs $a_i$, unknotting set of crossings $C_i$, and connected sum points $p_i$ for $i \in \{1, 2\}$.}
		\label{fig:crossings}
	\end{figure}
	
	\begin{figure}
		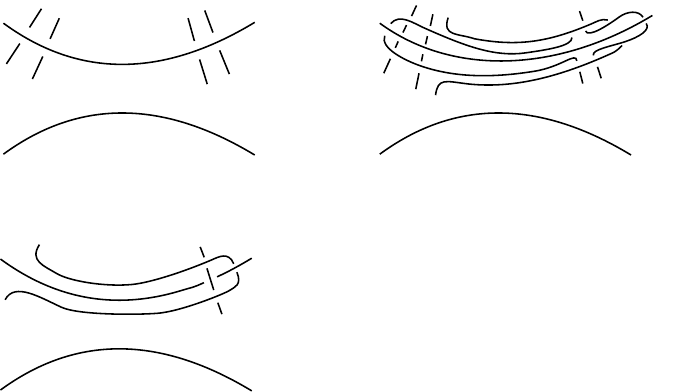
		\caption{Upper left: the diagram $\cD$ of the connected sum $K_1 \# K_2$. Upper right: the diagram $\cD'$ after performing finger moves on the crossings $c_1$ and $c_2$. Bottom left: the diagram near the connected sum sphere after changing one of the crossings in $C_2'$. Bottom right: after changing one of the crossings in $C_1'$. In the bottom row, there might be additional under-arcs across $a_+$ that are not shown.}
		\label{fig:connected-sum}
	\end{figure}
	
	\begin{figure}
\begingroup%
  \makeatletter%
  \providecommand\color[2][]{%
    \errmessage{(Inkscape) Color is used for the text in Inkscape, but the package 'color.sty' is not loaded}%
    \renewcommand\color[2][]{}%
  }%
  \providecommand\transparent[1]{%
    \errmessage{(Inkscape) Transparency is used (non-zero) for the text in Inkscape, but the package 'transparent.sty' is not loaded}%
    \renewcommand\transparent[1]{}%
  }%
  \providecommand\rotatebox[2]{#2}%
  \newcommand*\fsize{\dimexpr\f@size pt\relax}%
  \newcommand*\lineheight[1]{\fontsize{\fsize}{#1\fsize}\selectfont}%
  \ifx\svgwidth\undefined%
    \setlength{\unitlength}{137.33714254bp}%
    \ifx\svgscale\undefined%
      \relax%
    \else%
      \setlength{\unitlength}{\unitlength * \real{\svgscale}}%
    \fi%
  \else%
    \setlength{\unitlength}{\svgwidth}%
  \fi%
  \global\let\svgwidth\undefined%
  \global\let\svgscale\undefined%
  \makeatother%
  \begin{picture}(1,0.54746307)%
    \lineheight{1}%
    \setlength\tabcolsep{0pt}%
    \put(0,0){\includegraphics[width=\unitlength,page=1]{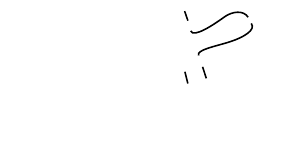}}%
    \put(0.81030651,0.35036995){\makebox(0,0)[lt]{\lineheight{1.25}\smash{\begin{tabular}[t]{l}$c_1'$\end{tabular}}}}%
    \put(0,0){\includegraphics[width=\unitlength,page=2]{d2.pdf}}%
    \put(0.87965233,0.50993401){\makebox(0,0)[lt]{\lineheight{1.25}\smash{\begin{tabular}[t]{l}$d_2'$\end{tabular}}}}%
    \put(0,0){\includegraphics[width=\unitlength,page=3]{d2.pdf}}%
  \end{picture}%
\endgroup%

		\caption{The result of changing the crossing $c_1'$.}
		\label{fig:d2}
	\end{figure}
    
	Suppose that $u(K_2) > 1$. The case $u(K_1) > 1$ is analogous. For $i \in \{1, 2\}$, let $c_i \in C_i$ be the crossing closest to $p_i$ and let $d_2$ be the crossing in $C_2$ furthest from $p_2$. Then perform a finger move on the lower strand at $c_1$ along $a_+$, isotoping it across the crossings in $C_2$. Next, isotope the lower strand at $c_2$ along $a_+$, isotoping it across the crossings in $C_1$. We write $c_i'$ for the crossing corresponding to $c_i$.  Perform a finger move on the lower strand at $d_2$ along $a_+$, moving it across $c_1'$, and resulting in the new crossing $d_2'$. The resulting diagram $\cD'$ is shown in the upper right of Figure~\ref{fig:connected-sum}. Changing all the crossings of $\cD$ and $\cD'$ along $a_+$ lead to equivalent diagrams, so, to the unknot. Let us write 
	\begin{equation}\label{eqn:crossings}
	C_1' := (C_1 \setminus \{c_1\}) \cup \{c_1'\} \text{ and } C_2' := (C_2 \setminus \{c_2, d_2\}) \cup \{c_2', d_2'\},
	\end{equation}
	and set $C := C_1' \cup C_2'$.
	
	If we change a single crossing in $C_2'$, after an isotopy, the diagram looks like the bottom left of Figure~\ref{fig:connected-sum} near the connected sum sphere. Analogously, changing a crossing in $C_1' \setminus \{c_1'\}$ leads to the diagram in the bottom right of Figure~\ref{fig:connected-sum}. Finally, changing $c_1'$, after an isotopy, results in the situation shown in Figure~\ref{fig:d2}.
	
	We claim that if we change any crossing $c \in C$, the resulting knot $K$ is prime. First, assume that $c \neq c_1'$. Without loss of generality, suppose that $c \in C_2'$, so, after an isotopy, we have the situation shown in the lower left of Figure~\ref{fig:connected-sum} (this is also the case for $c = c_2'$). The knot $K$ is split as a sum of two tangles $T_1$ and $T_2$ by the connected sum sphere $S$ of $K_1 \# K_2$, where $T_i$ lies on the same side of $S$ as $K_i$, for $i \in \{1, 2\}$.
	
	We now show that both $T_1$ and $T_2$ are prime. This will imply, by Lickorish's theorem \cite{Lickorish-prime} (see Theorem~\ref{Thm:LickorishGeneralised}), that their union $K$ is prime, as required. 
	
	We can glue a trivial tangle to $T_1$ to obtain $K_1$, which is prime by assumption. Hence, if $T_1$ were a trivial tangle, then $K_1$ would be a 2-bridge knot, which we have excluded by assumption. 
	
	We claim that $T_1$ contains no non-trivial ball-arc pair. Suppose that it did. We have already observed that we can attach a trivial tangle to the outside of $T_1$ to obtain $K_1$. Hence, if $T_1$ did contain a non-trivial ball-arc pair, this would be knotted like $K_1$ as $K_1$ is prime. 
	Let $K'_1$ be the result of changing the crossing $c_1$ of $K_1$. As $c_1$ is part of a minimal unknotting set for $K_1$, we have $u(K'_1) = u(K_1) - 1$, so $K_1' \neq K_1$.
	We note that we can also attach a trivial tangle in a different way to the outside of $T_1$ and obtain the knot $K'_1$. The non-trivial ball-arc pair in $T_1$ then forms a summand for $K'_1$. Hence, $K'_1 = K_1 \# J$ for some knot $J$, where $J \neq U$ since $K'_1 \neq K_1$. This contradicts the assumption that $K'_1$ is not the connected sum of $K_1$ and a non-trivial knot. Thus, we have shown that $T_1$ is a non-trivial tangle that contains no non-trivial ball-arc pair, and is hence prime.
	
	Let $K_2'$ be the knot obtained from $K_2$ by changing the crossing $c$. We can glue a trivial tangle to $T_2$ to obtain a link $L_2'$, which is $K_2'$ with an unknot linking the crossing $c$ non-trivially; see the left of Figure~\ref{fig:prime}. If $T_2$ were trivial, then $L_2'$ would be a 2-bridge link. Hence, $K_2'$ would be a 1-bridge knot; i.e., trivial, contradicting the assumption that $u(K_2) > 1$. 
	
	Our goal now is to show that $T_2$ has no non-trivial ball-arc pair. 
	The link $L'_2$ is the union of two tangles $A$ and $B$, where $A$ is shown on the right of Figure \ref{fig:prime}.
    
    \begin{figure}
        \centering
\begingroup%
  \makeatletter%
  \providecommand\color[2][]{%
    \errmessage{(Inkscape) Color is used for the text in Inkscape, but the package 'color.sty' is not loaded}%
    \renewcommand\color[2][]{}%
  }%
  \providecommand\transparent[1]{%
    \errmessage{(Inkscape) Transparency is used (non-zero) for the text in Inkscape, but the package 'transparent.sty' is not loaded}%
    \renewcommand\transparent[1]{}%
  }%
  \providecommand\rotatebox[2]{#2}%
  \newcommand*\fsize{\dimexpr\f@size pt\relax}%
  \newcommand*\lineheight[1]{\fontsize{\fsize}{#1\fsize}\selectfont}%
  \ifx\svgwidth\undefined%
    \setlength{\unitlength}{208.7875739bp}%
    \ifx\svgscale\undefined%
      \relax%
    \else%
      \setlength{\unitlength}{\unitlength * \real{\svgscale}}%
    \fi%
  \else%
    \setlength{\unitlength}{\svgwidth}%
  \fi%
  \global\let\svgwidth\undefined%
  \global\let\svgscale\undefined%
  \makeatother%
  \begin{picture}(1,0.35190129)%
    \lineheight{1}%
    \setlength\tabcolsep{0pt}%
    \put(0,0){\includegraphics[width=\unitlength,page=1]{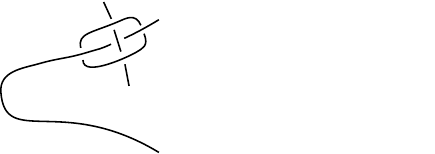}}%
    \put(0.22703214,0.25475472){\makebox(0,0)[lt]{\lineheight{1.25}\smash{\begin{tabular}[t]{l}$c$\end{tabular}}}}%
    \put(0.1080399,0.0088268){\makebox(0,0)[lt]{\lineheight{1.25}\smash{\begin{tabular}[t]{l}$L_2'$\end{tabular}}}}%
    \put(0,0){\includegraphics[width=\unitlength,page=2]{prime.pdf}}%
    \put(0.87498947,0.08585429){\makebox(0,0)[lt]{\lineheight{1.25}\smash{\begin{tabular}[t]{l}$A$\end{tabular}}}}%
    \put(0,0){\includegraphics[width=\unitlength,page=3]{prime.pdf}}%
  \end{picture}%
\endgroup%

        \caption{The left shows the link $L_2'$ obtained from $T_2$ by adding a trivial tangle. The right shows the tangle $A$.}
        \label{fig:prime}
    \end{figure}
    
    We claim that $A$ is prime. It is certainly not a trivial tangle, since it has a simple closed curve component. Suppose that there is a 2-sphere $S'$ intersecting the tangle $A$ transversely in two points that does not bound an unknotted arc. 
    Glue two copies of $A$ with a rotation through $\pi/2$. This has a prime non-split alternating diagram, and hence, by a theorem of Menasco~\cite{Menasco}, it is a prime non-split link. But $S'$ would force this link either to be split or not prime, which is a contradiction, proving the claim.

    We claim that $B$ is prime. Let $E'$ be the trivial tangle obtained from $A$ by deleting the unknot component. We obtain the trivial tangle $E$ from $E'$ by changing its only crossing~$c$.
    Then $E \cup B = K_2$. So, if $B$ were a trivial tangle, then $K_2$ would be a 2-bridge knot, which is contrary to our assumptions. Furthermore, if $B$ had a non-trivial ball-arc pair, then the arc would have to be knotted like $K_2$. But $E' \cup B = K_2'$,
    so $K_2' = K_2 \# J$ for some knot $J$. Since $c$ is part of a minimal unknotting set, $u(K_2') = u(K_2) - 1$, so $K_2' \neq K_2$, and hence $J \neq U$. However, we are assuming that this is not case. This proves that $B$ is indeed prime.

    Since we have proved that $A$ and $B$ are both prime generalised tangles, Theorem~\ref{Thm:LickorishGeneralised} implies that $L'_2 = A \cup B$ is a prime link.

    We are now in a position to show that $T_2$ has no non-trivial ball-arc pair. Suppose that, on the contrary, there is a sphere $S''$ in $T_2$ that intersects $T_2$ in two points
    and which bounds a non-trivial arc. This arc is therefore knotted like a non-trivial knot $K'$. Then 
    $S''$ transversely intersects $L_2'$ in two points,
    since $L'_2$ is the union of $T_2$ and a trivial tangle. On one side, $S''$ bounds an arc knotted like $K'$. On the other side, it also does not bound a trivial arc, since $L'_2$ is a link of two components. Hence, $L'_2$ is not prime, contrary to what we proved above.

    Thus, we have shown that $T_1$ and $T_2$ are both prime tangles, and therefore $K$ is prime, as required.
    
	We now consider the case when $c = c_1'$; see Figure~\ref{fig:d2}. As before, we split $K$ into tangles $T_1$ and $T_2$. The same argument shows that $T_1$ is prime. To see that $T_2$ is non-trivial, note that we can glue a trivial tangle to it to obtain a link $L_2$, which is isotopic to $K_2$ with an unknot linking the crossing $d_2$ non-trivially; see Figure~\ref{fig:weaving}. If $T_2$ were trivial, then $L_2$ would be a 2-bridge link. Hence, $K_2$ would be a trivial knot, contradicting the assumption that it is prime.  
	
	We can again write $L_2$ as a sum of tangles $A$ and $B$, where $A$ is shown in Figure~\ref{fig:prime}. We have already proved that $A$ is prime. Let $E$ be the trivial tangle obtained from $A$ by deleting the unknot component, and $E'$ the trivial tangle obtained by changing the crossing $d_2'$ of $E$. With this notation, the same argument as before shows that $B$ is also prime, and so $L_2 = A \cup B$ is prime. It follows that $T_2$ is prime and hence that $K = T_1 \cup T_2$ is prime. 
\end{proof}

\begin{rem}
We conjecture that Theorem~\ref{thm:strong} also holds without the extra assumption that $K_1$ and $K_2$ are not 2-bridge.
\end{rem}


\begin{figure}
    \centering
\begingroup%
  \makeatletter%
  \providecommand\color[2][]{%
    \errmessage{(Inkscape) Color is used for the text in Inkscape, but the package 'color.sty' is not loaded}%
    \renewcommand\color[2][]{}%
  }%
  \providecommand\transparent[1]{%
    \errmessage{(Inkscape) Transparency is used (non-zero) for the text in Inkscape, but the package 'transparent.sty' is not loaded}%
    \renewcommand\transparent[1]{}%
  }%
  \providecommand\rotatebox[2]{#2}%
  \newcommand*\fsize{\dimexpr\f@size pt\relax}%
  \newcommand*\lineheight[1]{\fontsize{\fsize}{#1\fsize}\selectfont}%
  \ifx\svgwidth\undefined%
    \setlength{\unitlength}{246.71305102bp}%
    \ifx\svgscale\undefined%
      \relax%
    \else%
      \setlength{\unitlength}{\unitlength * \real{\svgscale}}%
    \fi%
  \else%
    \setlength{\unitlength}{\svgwidth}%
  \fi%
  \global\let\svgwidth\undefined%
  \global\let\svgscale\undefined%
  \makeatother%
  \begin{picture}(1,0.25365674)%
    \lineheight{1}%
    \setlength\tabcolsep{0pt}%
    \put(0,0){\includegraphics[width=\unitlength,page=1]{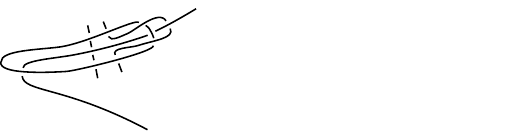}}%
    \put(0.34595479,0.17587037){\makebox(0,0)[lt]{\lineheight{1.25}\smash{\begin{tabular}[t]{l}$d_2'$\end{tabular}}}}%
    \put(0,0){\includegraphics[width=\unitlength,page=2]{weaving.pdf}}%
  \end{picture}%
\endgroup%

    \caption{The left shows the link $L_2$ obtained from the tangle $T_2$ by adding a trivial tangle. The right shows $L_2$ after the isotopy of the unknot component into a neighbourhood of the crossing $d_2$ of $K_2$.}
    \label{fig:weaving}
\end{figure}

\subsection{New unknotting numbers assuming additivity of $u$}
 If we assume that $u$ is additive and consider knots that appear along length $u(K) + u(K')$ unknotting trajectories of connected sums $K \# K'$ where $u(K)$ and $u(K')$ are both known, we obtain the unknotting number of 43 knots $K''$ with $c(K'') \le 12$ that are unknown; see Table~\ref{table:new}. More specifically, we considered knots that appear along minimal unknotting trajectories of counterexamples $\cD$ to Conjecture~\ref{conj:strong} by changing $u(\cD) - 4$ or $u(\cD) - 5$ crossings in a pre-computed unknotting set of $\cD$, then simplifying, obtaining a diagram $\cD'$, and then brute-forcing all unknotting trajectories from $\cD'$.
 
\begin{table}[h]
	\begin{center}
	\begin{tabular}{|c|c|c|c|c|c|c|c|c|}
		\hline
		12a824 & 12a835 & 12a878 & 12a898 & 12a916 & 12a981 & 12a999 & 12n80 & 12n71 \\
		\hline
		12n82 & 12n87 & 12n106 & 12n113 & 12n115 & 12n132 & 12n154 & 12n159 & 12n170\\
		\hline
		12n190 & 12n192 & 12n195 & 12n210 & 12n214 & 12n233 & 12n235 & 12n238 & 12n241 \\
		\hline
		12n246 & 12n309 & 12n315 & 12n346 & 12n437 & 12n500 & 12n548 & 12n670 & 12n673 \\
		\hline
		 12n675 & 12n678 & 12n681 & 12n690 & 12n695 & 12n721 & 12n723 & & \\
		 \hline
	\end{tabular}
	 \vspace{2mm}
	\end{center}
\caption{At most 12-crossings knots with unknown unknotting numbers that we found on minimal
	(assuming the additivity of the unknotting number) unknotting trajectories of connected sums $K \# K'$ where $u(K)$ and $u(K')$ are known.}\label{table:new}
\end{table}
 
 In all these examples, $u(K'')$ was equal to the KnotInfo upper bound. If one of these knots had lower unknotting number than the upper bound, it would imply that the unknotting number is not additive.
 
 \begin{figure}
 	\includegraphics{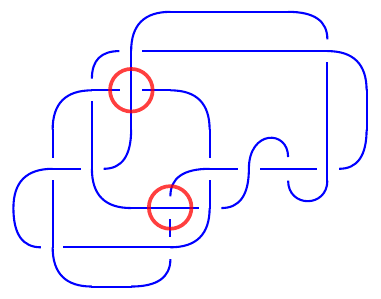}
 	\caption{A 14-crossing diagram of the knot 12a981. We can obtain $T_{2,5} \# -T_{2,7}$ by changing the two encircled crossings.} \label{fig:12a981}
 \end{figure}
 
Except for 12a898, 12a916, 12a981, and 12a999, all of these knots $K''$ have a crossing change in their KnotInfo diagram $\cD$ that results in a connected sum $K_0 \# K_1$ with $u(K_0)$ and $u(K_1)$ known and 
\[
u(\cD) = u(K_0) + u(K_1) - 1.
\]
Hence, for these knots, their unknotting number can be computed without the assistance of machine learning, assuming Conjecture~\ref{conj:additivity}. 
See Appendix~\ref{sec:trajectories} for conjecturally minimal unknotting sets of connected sums of knots with known unknotting numbers yielding 12a898, 12a916, and 12a999.

We have found by hand a 14-crossing diagram $\cD$ for 12a981 where two crossing changes yield a diagram of $T_{2,5} \# -T_{2,7}$;  see Figure~\ref{fig:12a981}. According to KnotInfo, $u(12a981) \in \{2, 3\}$. Assuming additivity of the unknotting number, 
\[
u(T_{2,3} \# -T_{2,7}) = u(T_{2,5}) + u(T_{2,7}) = 2 + 3 = 5, 
\]
which implies that $u(12a981) = 3$.
The diagram $\cD$ was obtained from $T_{2,5} \# -T_{2,7}$ by pushing a finger from one of the crossings of $T_{2,5}$ next to the connected sum point into $T_{2,7}$, then pushing a finger from the crossing of $T_{2,7}$ next to the connected sum away from the previous finger into $T_{2,5}$, and changing the other two crossings adjacent to the connected sum. This is the same procedure as in Section~\ref{sec:strong} that we used to change in-component to inter-component crossings; see Figure~\ref{fig:connected-sum}. Changing  the top left encircled crossing in Figure~\ref{fig:12a981} gives the hyperbolic knot 14n20178, while changing the lower right crossing results in the hyperbolic knot 14n20981.

\section{Hard unknot diagrams}

While the RL agent was running, it found 5,873,958 knot diagrams that had trivial Jones polynomial but which SnapPy could not simplify using the `level' algorithm. This performs random sequences of R3 moves, and simplifies using R1 and R2 moves whenever possible. As the algorithm is not deterministic, the check was repeated 25 times for each diagram. The knots were verified to be unknots by computing their Seifert genus via the \texttt{knot\char`_floer\char`_homology} function of SnapPy, written by Ozsv\'ath and Szab\'o. See Figure~\ref{fig:hard} for the distribution of the crossing number in the dataset.

\begin{figure}
	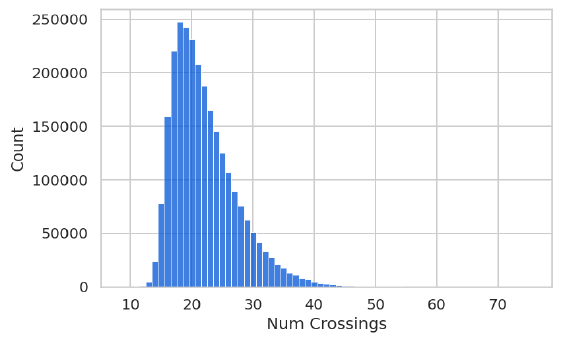
	\caption{The crossing number distribution of the hard unknot candidates.} \label{fig:hard}
\end{figure}

Note that SnapPy has a more sophisticated unknotting heuristic called `global', which, before performing `level' simplification, also attempts to perform \emph{pass moves} that decrease the number of crossings. These consist of picking up a strand that runs over or under the rest of the diagram (corresponding to a sequence of consecutive over- or under-crossings) and putting it down somewhere else. It is possible, however, that a diagram can be simplified using R1 and R2 moves following a sequence of pass moves that do not change the crossing number, and the `global' heuristic would not find such a simplification.
Petronio and Zanellati~\cite{Petronio-Zanellati} have given an example of a 120-crossing hard unknot diagram that cannot be monotonically simplified using Reidemeister moves and pass moves.

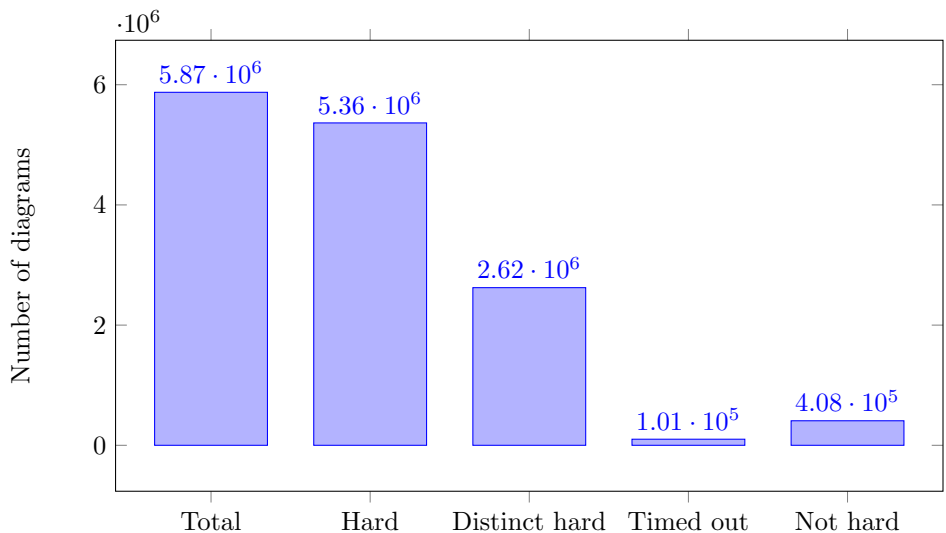
\begin{figure}
	\begin{tikzpicture}  
		
		\begin{axis}  
			[
			scale only axis,
			width = 11 cm,
			height = 6 cm,
			bar width = 1.5 cm,
			ybar,  
			enlargelimits=0.15,  
			ylabel={Number of diagrams},
			symbolic x coords={Total, Hard, Distinct hard, Timed out, Not hard}, 
			xtick=data,  
			nodes near coords,
			nodes near coords align={vertical},  
			]  
			\addplot coordinates {(Total, 5873958) (Hard, 5364424) (Distinct hard, 2623203) (Timed out, 101339) (Not hard, 408195)};  
			
		\end{axis}  
	\end{tikzpicture}  
	\caption{The dataset of hard unknot candidates.}
	\label{fig:hard_dataset}
\end{figure}

We verified that 5,364,424 of these diagrams are hard; see Figure~\ref{fig:hard_dataset}. We did this by listing all possible diagrams that can be obtained using R3 moves, and showing that none of these can be simplified using R1 and R2 moves. Typically, the number of R3-equivalent diagrams is small, but in some cases there are more than $10^4$. Hence, we set a 2-minute timeout. Our algorithm only timed out in 101,339 cases, and showed that the diagram was not hard in 408,195 cases. By comparing the Gauss codes of the remaining 5,364,424 hard diagrams, we obtained 2,623,203 distinct hard unknot diagrams. These fall into 2,464,461 R3-equivalence classes, where no two diagrams are related by a sequence of R3 moves. We are making these diagrams publicy available alongside this paper, see Appendix \ref{ref:data}.

We attempted to find `really hard' unknot diagrams which SnapPy fails to simplify completely even with its `global' heuristic. We checked for each of the above hard unknot diagram whether it can be reduced to the trivial diagram by either $10^6$ independent simplification attempts, where we ran the simplification algorithm repeatedly on the same diagram, or $10^5$ subsequent simplification attempts, where the output of each simplification is used as the input of the next attempt. We identified 382 hard unknot diagram that could not be simplified with either of these two methods. These diagrams are being made available; see Appendix~\ref{ref:data}.

An example of a hard unknot diagram with 6225 R3-equivalent diagrams is shown in Figure~\ref{fig:large-moduli}, which survived $10^4$ subsequent simplification attempts. Its PD code is 

\vspace{2mm}
\noindent {\small [[62, 25, 63, 26], [59, 4, 60, 5], [66, 17, 67, 18], [63, 11, 64, 10], [73, 19, 74, 18], [83, 33, 84, 32], [78, 41, 79, 42], [38, 31, 39, 32], [52, 46, 53, 45], [75, 46, 76, 47], [44, 52, 45, 51], [65, 55, 66, 54], [79, 48, 80, 49], [74, 53, 75, 54], [40, 57, 41, 58], [70, 61, 71, 62], [60, 69, 61, 70], [49, 76, 50, 77], [68, 71, 69, 72], [47, 80, 48, 81], [81, 57, 82, 56], [42, 77, 43, 78], [50, 44, 51, 43], [11, 16, 12, 17], [5, 26, 6, 27], [7, 28, 8, 29], [34, 29, 35, 30], [8, 36, 9, 35], [30, 37, 31, 38], [27, 6, 28, 7], [36, 10, 37, 9], [15, 12, 16, 13], [23, 15, 24, 14], [3, 21, 4, 20], [67, 23, 68, 22], [72, 21, 73, 22], [13, 25, 14, 24], [55, 65, 56, 64], [33, 1, 34, 84], [58, 2, 59, 1], [39, 82, 40, 83], [19, 3, 20, 2]]}
\vspace{2mm}

\begin{figure}
	\includegraphics[width = 10cm]{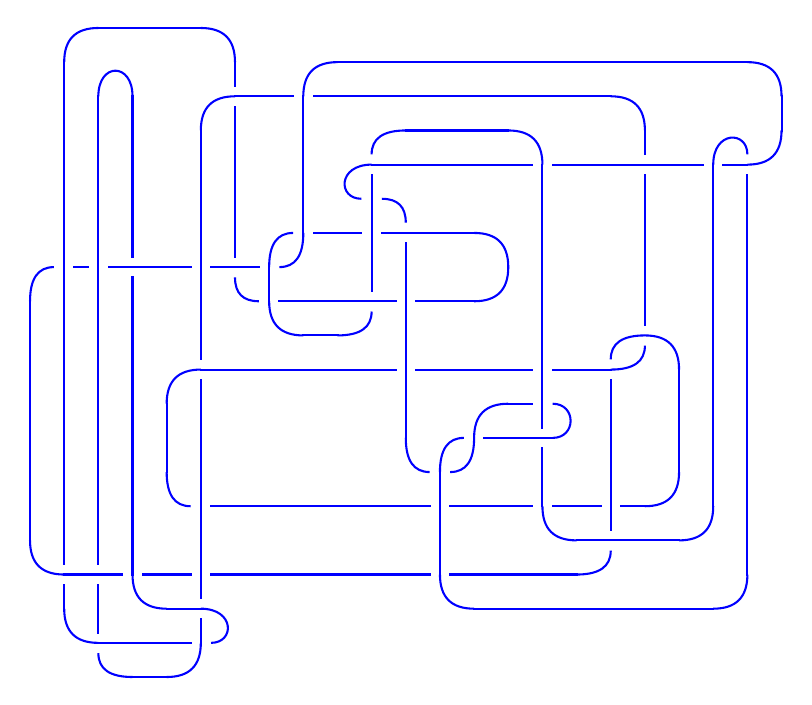}
	\caption{A 42-crossing hard unknot diagram with 6225 R3-equivalent diagrams that we have not been able to simplify by calling SnapPy's `global' heuristic 10000 times.}
	\label{fig:large-moduli}
\end{figure}

\bibliographystyle{plain}
\bibliography{topology}

\appendix
\section{Data Availability}
\label{ref:data}
We are making the following datasets available alongside this paper at \href{https://storage.mtls.cloud.google.com/gdm-unknotting}{gs://gdm-unknotting}.

\begin{enumerate}
    \item Knots and their unknotting numbers: 31,380 random knots and 26,473 quasipositive knots.
    \item 2,464,461 hard unknot diagrams.
    \item 382 `really hard' unknot diagrams.
    \item 20 counterexamples to Conjecture~\ref{conj:strong}.
\end{enumerate}

\section{Unknotting trajectories}\label{sec:trajectories}

Here, we give unkotting trajectories of length $u(K_1) + u(K_2)$ of diagrams of connected sums $K_1 \# K_2$, where $K_1$ and $K_2$ have known unknotting numbers, and the trajectories pass through the knots 12a898, 12a916, and 12a999 with currently unknown unknotting numbers, respectively. Note that, for 12a916 and 12a999, we switched 5 crossings of an unknotting set of size 9 found using RL, then simplified the resulting diagram, and switched one further crossing along a minimal unknotting set found using brute force.

\subsection{12a898}

Initial PD code of $K_1 \# K_2$: 

\noindent [[135, 86, 136, 87], [133, 11, 134, 10], [128, 78, 129, 77], [131, 70, 132, 71], [129, 65, 130, 64], [132, 57, 133, 58], [125, 76, 126, 77], [124, 76, 125, 75], [122, 186, 123, 185], [117, 43, 118, 42], [119, 21, 120, 20], [121, 18, 122, 19], [116, 191, 117, 192], [112, 47, 113, 48], [114, 23, 115, 24], [111, 17, 112, 16], [107, 62, 108, 63], [105, 61, 106, 60], [103, 8, 104, 9], [101, 183, 102, 182], [99, 3, 100, 2], [98, 38, 99, 37], [97, 28, 98, 29], [172, 188, 173, 187], [94, 89, 95, 90], [175, 81, 176, 80], [180, 55, 181, 56], [176, 50, 177, 49], [173, 46, 174, 47], [170, 20, 171, 19], [96, 193, 97, 0], [168, 189, 169, 190], [165, 52, 166, 53], [167, 45, 168, 44], [163, 84, 164, 85], [164, 25, 165, 26], [161, 68, 162, 69], [158, 84, 159, 83], [160, 68, 161, 67], [146, 184, 147, 183], [157, 82, 158, 83], [152, 74, 153, 73], [155, 66, 156, 67], [150, 62, 151, 61], [154, 14, 155, 13], [148, 9, 149, 10], [144, 36, 145, 35], [143, 30, 144, 31], [141, 88, 142, 89], [138, 33, 139, 34], [139, 33, 140, 32], [136, 88, 137, 87], [153, 130, 154, 131], [110, 127, 111, 128], [108, 123, 109, 124], [171, 120, 172, 121], [166, 115, 167, 116], [126, 109, 127, 110], [149, 104, 150, 105], [145, 100, 146, 101], [140, 93, 141, 94], [134, 181, 135, 182], [162, 179, 163, 180], [156, 177, 157, 178], [113, 174, 114, 175], [118, 169, 119, 170], [178, 159, 179, 160], [106, 151, 107, 152], [102, 147, 103, 148], [95, 142, 96, 143], [92, 137, 93, 138], [43, 191, 44, 190], [21, 189, 22, 188], [7, 185, 8, 184], [31, 91, 32, 90], [54, 86, 55, 85], [48, 80, 49, 79], [15, 78, 16, 79], [63, 75, 64, 74], [58, 72, 59, 71], [12, 70, 13, 69], [72, 60, 73, 59], [26, 53, 27, 54], [81, 51, 82, 50], [6, 42, 7, 41], [4, 40, 5, 39], [1, 37, 2, 36], [91, 35, 92, 34], [192, 28, 193, 27], [51, 24, 52, 25], [45, 23, 46, 22], [186, 18, 187, 17], [65, 15, 66, 14], [56, 11, 57, 12], [40, 6, 41, 5], [38, 4, 39, 3], [29, 1, 30, 0]] 

\noindent PD code of first summand $K_1$: 

\noindent [(3, 18, 4, 19), (29, 10, 30, 11), (31, 12, 32, 13), (33, 14, 34, 15), (24, 17, 25, 18), (6, 37, 7, 0), (7, 26, 8, 27), (16, 23, 17, 24), (1, 20, 2, 21), (21, 2, 22, 3), (27, 8, 28, 9), (22, 15, 23, 16), (25, 34, 26, 35), (13, 32, 14, 33), (11, 30, 12, 31), (36, 5, 37, 6), (4, 35, 5, 36), (9, 28, 10, 29), (19, 0, 20, 1)]

\noindent $\tau(k_1)=-8\Rightarrow u(k_1) \geq 8$

\noindent PD code of second summand $K_2$: 

\noindent [(9, 25, 10, 24), (26, 42, 27, 41), (23, 38, 24, 39), (6, 34, 7, 33), (3, 41, 4, 40), (16, 44, 17, 43), (51, 33, 0, 32), (17, 13, 18, 12), (31, 51, 32, 50), (21, 9, 22, 8), (49, 5, 50, 4), (20, 45, 21, 46), (19, 15, 20, 14), (46, 28, 47, 27), (1, 37, 2, 36), (42, 11, 43, 12), (39, 3, 40, 2), (37, 22, 38, 23), (13, 19, 14, 18), (28, 48, 29, 47), (30, 6, 31, 5), (48, 30, 49, 29), (25, 11, 26, 10), (44, 16, 45, 15), (34, 8, 35, 7), (35, 1, 36, 0)]

\noindent $\tau(K_2)=7\Rightarrow u(K_2) \geq 7$\\
$u(K_1 + K_2) \ge 15$ assuming additivity of unknotting number. \\
Minimal unknotting sequence: 

\noindent [49, 0, 20, 1, 57, 76, 66, 85, 84, 79, 56, 96, 67, 65, 69]  \\
Crossing switches to reach 12a898: 

\noindent [49, 0, 1, 57, 76, 66, 85, 84, 79, 56, 96, 67]

\subsection{12a916}
Initial PD code of $K_1 \# K_2$: 

\noindent [(55, 22, 56, 23), (58, 34, 59, 33), (57, 36, 58, 37), (59, 19, 60, 18), (63, 14, 64, 15), (61, 30, 62, 31), (64, 40, 65, 39), (65, 27, 66, 26), (69, 44, 70, 45), (67, 49, 68, 48), (71, 11, 72, 10), (73, 108, 74, 109), (80, 16, 81, 15), (76, 21, 77, 22), (82, 32, 83, 31), (79, 38, 80, 39), (85, 12, 86, 13), (86, 42, 87, 41), (88, 3, 89, 4), (89, 47, 90, 46), (92, 43, 93, 44), (90, 2, 91, 1), (94, 10, 95, 9), (95, 110, 96, 111), (98, 6, 99, 5), (96, 7, 97, 8), (97, 24, 98, 25), (100, 37, 101, 38), (101, 17, 102, 16), (104, 30, 105, 29), (106, 108, 107, 107), (52, 109, 53, 110), (77, 57, 78, 56), (87, 67, 88, 66), (93, 71, 94, 70), (51, 73, 52, 72), (54, 76, 55, 75), (102, 82, 103, 81), (60, 84, 61, 83), (68, 92, 69, 91), (78, 100, 79, 99), (62, 104, 63, 103), (84, 106, 85, 105), (74, 54, 75, 53), (23, 6, 24, 7), (111, 8, 0, 9), (32, 17, 33, 18), (35, 20, 36, 21), (4, 25, 5, 26), (13, 28, 14, 29), (19, 34, 20, 35), (27, 40, 28, 41), (49, 42, 50, 43), (2, 47, 3, 48), (11, 50, 12, 51), (45, 0, 46, 1)]

\noindent PD code of first summand $K_1$: 

\noindent [(11, 4, 12, 5), (1, 8, 2, 9), (18, 13, 19, 14), (16, 21, 17, 22), (12, 19, 13, 20), (20, 23, 21, 0), (9, 2, 10, 3), (15, 6, 16, 7), (3, 10, 4, 11), (22, 17, 23, 18), (7, 14, 8, 15), (5, 0, 6, 1)]

\noindent $\tau(K_1)=-4\Rightarrow u(K_1) \geq 4$

\noindent PD code of second summand $K_2$:

\noindent [(2, 10, 3, 9), (19, 11, 20, 10), (16, 6, 17, 5), (4, 16, 5, 15), (1, 19, 2, 18), (11, 21, 12, 20), (14, 4, 15, 3), (13, 23, 14, 22), (23, 7, 0, 6), (21, 13, 22, 12), (8, 18, 9, 17), (7, 1, 8, 0)]

\noindent $\tau(K_2)=5\Rightarrow u(K_2) \geq 5$

\noindent $u(K_1 + K_2) \ge 9$ assuming additivity of unknotting number.

\noindent Minimal unknotting sequence: 

\noindent [10, 44, 46, 47, 53, 33, 42, 7, 36]

\noindent Initial crossing switches: 

\noindent [10, 46, 53, 42, 36]

\noindent PD code  after initial switches and simplification: 

\noindent [(5, 22, 6, 23), (9, 27, 10, 26), (19, 8, 20, 9), (14, 23, 15, 24), (11, 3, 12, 2), (13, 5, 14, 4), (16, 8, 17, 7), (1, 11, 2, 10), (3, 13, 4, 12), (6, 16, 7, 15), (24, 17, 25, 18), (27, 20, 0, 21), (18, 25, 19, 26), (21, 0, 22, 1)]

\noindent Crossing switch after simplification: [3]

\noindent Final PD code (12a916): 

\noindent [(6, 23, 7, 24), (8, 26, 9, 25), (18, 9, 19, 10), (22, 16, 23, 15), (11, 3, 12, 2), (13, 5, 14, 4), (16, 8, 17, 7), (1, 11, 2, 10), (3, 13, 4, 12), (5, 15, 6, 14), (24, 17, 25, 18), (27, 20, 0, 21), (19, 26, 20, 27), (21, 0, 22, 1)]

\subsection{12a999}
Initial PD code of $K_1 \# K_2$: 

\noindent [(55, 22, 56, 23), (58, 34, 59, 33), (57, 36, 58, 37), (59, 19, 60, 18), (63, 14, 64, 15), (61, 30, 62, 31), (64, 40, 65, 39), (65, 27, 66, 26), (69, 44, 70, 45), (67, 49, 68, 48), (71, 11, 72, 10), (73, 108, 74, 109), (80, 16, 81, 15), (76, 21, 77, 22), (82, 32, 83, 31), (79, 38, 80, 39), (85, 12, 86, 13), (86, 42, 87, 41), (88, 3, 89, 4), (89, 47, 90, 46), (92, 43, 93, 44), (90, 2, 91, 1), (94, 10, 95, 9), (95, 110, 96, 111), (98, 6, 99, 5), (96, 7, 97, 8), (97, 24, 98, 25), (100, 37, 101, 38), (101, 17, 102, 16), (104, 30, 105, 29), (106, 108, 107, 107), (52, 109, 53, 110), (77, 57, 78, 56), (87, 67, 88, 66), (93, 71, 94, 70), (51, 73, 52, 72), (54, 76, 55, 75), (102, 82, 103, 81), (60, 84, 61, 83), (68, 92, 69, 91), (78, 100, 79, 99), (62, 104, 63, 103), (84, 106, 85, 105), (74, 54, 75, 53), (23, 6, 24, 7), (111, 8, 0, 9), (32, 17, 33, 18), (35, 20, 36, 21), (4, 25, 5, 26), (13, 28, 14, 29), (19, 34, 20, 35), (27, 40, 28, 41), (49, 42, 50, 43), (2, 47, 3, 48), (11, 50, 12, 51), (45, 0, 46, 1)]

\noindent PD code of first summand $K_1$: 

\noindent [(11, 4, 12, 5), (1, 8, 2, 9), (18, 13, 19, 14), (16, 21, 17, 22), (12, 19, 13, 20), (20, 23, 21, 0), (9, 2, 10, 3), (15, 6, 16, 7), (3, 10, 4, 11), (22, 17, 23, 18), (7, 14, 8, 15), (5, 0, 6, 1)]

\noindent $\tau(K_1)=-4 \Rightarrow u(K_1) \geq 4$

\noindent PD code of second summand $K_2$: 

\noindent [(2, 10, 3, 9), (19, 11, 20, 10), (16, 6, 17, 5), (4, 16, 5, 15), (1, 19, 2, 18), (11, 21, 12, 20), (14, 4, 15, 3), (13, 23, 14, 22), (23, 7, 0, 6), (21, 13, 22, 12), (8, 18, 9, 17), (7, 1, 8, 0)]

\noindent $\tau(K_2)=5 \Rightarrow u(K_2) \geq 5$

\noindent  $u(K_1 + K_2) \ge 9$ assuming additivity of unknotting number.

\noindent Minimal unknotting sequence: 

\noindent [10, 44, 46, 47, 53, 33, 42, 7, 36]

\noindent Initial crossing switches: 

\noindent [10, 46, 53, 42, 36]

\noindent PD code after initial switches and simplification: 

\noindent [(5, 26, 6, 27), (6, 30, 7, 29), (21, 10, 22, 11), (15, 22, 16, 23), (14, 2, 15, 1), (12, 4, 13, 3), (16, 8, 17, 7), (18, 10, 19, 9), (2, 12, 3, 11), (4, 14, 5, 13), (8, 18, 9, 17), (27, 20, 28, 21), (31, 24, 0, 25), (19, 28, 20, 29), (23, 30, 24, 31), (25, 0, 26, 1)]

\noindent Crossing switch after simplification: [0]

\noindent Final PD code (12a999): 

\noindent [(5, 26, 6, 27), (6, 30, 7, 29), (21, 10, 22, 11), (15, 22, 16, 23), (14, 2, 15, 1), (12, 4, 13, 3), (16, 8, 17, 7), (18, 10, 19, 9), (2, 12, 3, 11), (4, 14, 5, 13), (8, 18, 9, 17), (27, 20, 28, 21), (31, 24, 0, 25), (19, 28, 20, 29), (23, 30, 24, 31), (25, 0, 26, 1)]

\end{document}